\theoremstyle{definition}
\newtheorem{definition}{Definition}[section]
\newtheorem{example}{Example}[section]
\newtheorem{proposition}{Proposition}[section]
\newtheorem{remark}{Remark}[section]
\newtheorem{theorem}{Theorem}[section]
\newtheorem{lemma}{Lemma}[section]
\newtheorem{corollary}{Corollary}[section]
\title{Comparing discounted and average-cost Markov Decision Processes: \emph{a statistical significance perspective}.}
\author{Dylan Solms\\
	Department of Decision Sciences\\
	University of South Africa\\
	South Africa \\
	\texttt{62652257@mylife.unisa.ac.za} \\
}
\begin{document}
\maketitle

\begin{abstract}
Optimal Markov Decision Process policies for problems with finite state and action space are identified through a partial ordering by comparing the value function across states. This is referred to as state-based optimality. This paper identifies when such optimality guarantees some form of system-based optimality as measured by a scalar. Four such system-based metrics are introduced. Uni-variate empirical distributions of these metrics are obtained through simulation as to assess whether theoretically optimal policies provide a statistically significant advantage. This has been conducted using a Student's $t$-test, Welch's $t$-test and a Mann-Whitney $U$-test. The proposed method is applied to a common problem in queuing theory: admission control.
\end{abstract}

\tableofcontents

\newpage

\section{Introduction}

This paper has been motivated by the following scenario. Various policies had empirical distributions of their discounted long-run costs generated through simulation. Among these policies, there was a policy believed to be better than the rest in the sense that the theoretical expected discounted long-run costs starting from each state was less or equal to that under any of the other policy. However, obtaining this theoretically optimal policy was computationally intensive as compared to the other policies of which some were heuristics that required almost negligible computation. For a system that has parameters that vary at various points throughout the day, the selected policy would have to be updated. Hence, if the theoretically optimal policy were to be chosen it would have to provide reduced system cost that justified its hefty computational budget.

This is not a simple trade-off to evaluate. However, establishing whether the simulated performances of the two policies are the same or not is a good starting point. In other words, the \emph{null hypothesis} that the means of the empirical distributions are equal is to be tested against an \emph{alternative hypothesis} that the mean of the optimal policy is the smaller of the two.

One approach to assessing the simulated performances would be to perform a significance test between the optimal discounted policy $\pi^*$ and its alternative $\pi$ at each state $x \in \mathcal{X}$. Put differently, a significance test take place between the \emph{value functions} \cite{suttonRLbook} $J_{\pi}^{\alpha}(x)$ such that the null hypothesis $J_{\pi^*}^{\alpha}(x) - J_{\pi}^{\alpha}(x) = 0$ would evaluated to see if it is \emph{rejected} in favour of $J_{\pi^*}^{\alpha}(x) - J_{\pi}^{\alpha}(x) < 0$.

Various issues plague such as approach. Firstly, a total of $|\mathcal{X}|$ statistical tests would have to be performed. Secondly, it is unclear as how to proceed if not all null hypotheses are either rejected or fail to be rejected. Thirdly, each state $x_k$ would require a sufficient amount of sampled trajectories to be simulated using $x$ as the initial state $x_0$. Lastly, such a test would clarify whether \emph{state-based performance} was better but not \emph{system-based performance}. Arguably, the latter performance measure is what the entity responsible for the system would be interested in optimising.

As to satisfy these four issues, this paper proposes two scalar metrics that represent system-based performance of discounted policies such that they can be compared via a single significance test. Two additional metrics are given for the average-cost system-based performance as well. Furthermore, connections between the discounted and average-cost metrics are established through the theory of \emph{Blackwell optimality}. These metrics would not require a minimum amount of trajectories to be sampled from each state but would require a minimum amount of trajectories to be sampled starting from states that were drawn from a probability distribution $f:\mathcal{X} \to \mathbb{R}_{\geq 0}$ where $\int_{\mathcal{X}} f(x)\, dx = 1$ and $x_0 \sim f$. Naturally, these scalar metrics are influenced by the choice of $f$. This paper proposes the use of the \emph{uniform distribution} as well as the \emph{stationary distribution} of the system's ergodic Markov chain under its policy as denoted by $\phi_{\pi}$. The reasoning, consequences and means of interpreting the scalar metrics under these two choices are addressed.

The developed theory is first applied to a simple problem where \emph{Markov Decisions Processes} (MDPs) are randomly generated as to satisfy the assumptions made by the theory. Such an approach is also useful in assessing a number of MDPs under different sizes and parameters. The theory is then applied to a queue control problem which takes the form of a \emph{Continuous-time Markov Decision Process} (CTMDP). Using \emph{uniformisation}, the CTMDP is converted to a MDP under which the theory can be applied. The results show that theoretically optimal policies are not always optimal in a statistically significant sense.

\newpage

\section{Background and literature}

\subsection{Markov Decision Processes}

Markov Decision Processes are used as a framework for sequential decision-making under uncertainty\footnote{The uncertainty refers to stochastic dynamics of the system and not model uncertainty. Model uncertainty requires \emph{robust optimisation}.} when a model describing the stochastic dynamics of the be environment is available. One of the main characteristics that distinguishes different MDPs is the specification of how long it will operate. \emph{Finite-horizon} MDPs have a known termination time or goal-state at which the system ceases to operate. This allows the \emph{total cost} of the system to be bounded. As a result, the total-cost criteria is an appealing and intuitive objective function that can be used to search for an optimal policy in a finite-horizon MDP. When the operating duration is infinite, irrelevant, unbounded or unknown then an \emph{infinite-horizon} MDP is necessitated. The total-cost has a meaningless infinite value which cannot be used to search for an optimal policy. Two methods of bounding the objective function is to either introduce a \emph{discount factor} as in section~\ref{section: discounted cost} or to compute the \emph{average-cost} with respect to some time-step as in section~\ref{section: average cost}. 

This paper focuses in the infinite-horizon MDP with \emph{finite state-space and actions}\footnote{This is an important specification as it guarantees a finite number of policies to exist from which one is optimal.} which can be described through the following tuple.

\begin{definition}[\textbf{MDP}]
\begin{equation}\label{eq:mdp tuple}
\mathcal{M} = \left(  \mathcal{X},\mathscr{A},\mathcal{P},\mathscr{C} \right)  
\end{equation}
where
\begin{itemize}
    \item[$\mathcal{X}$\quad] is the state space. In \emph{finite} $n$-dimensional MDPs this consists of a bounded set of integers $\mathcal{X} = \bigtimes_{i=1}^n \mathbb{Z}\cap [\underline{m}_i,\overline{m}_i]$ where $\bigtimes$ denotes the Cartesian product and $\infty<\underline{m}_i<\overline{m}_i<\infty$. If $\mathcal{X}$ has a \emph{continuous} component then an \emph{infinite} dimensional MDP arises.
    \item[$\mathscr{A}$\quad] is the action space. At decision epochs, this space is queried with regards to finite set of available actions $\mathscr{A}\left(x_n\right)$ where $\mathscr{A}: \mathcal{X} \to \mathbb{Z}\cap [\underline{m},\overline{m}]$ and $\infty<\underline{m}<\overline{m}<\infty$.
    \item[$\mathcal{P}$\quad] is the transition model that exhibits the \emph{Markov property}\footnote{The future given the present is independent of the past such that the present state is a sufficient statistics in predicting the probability of transitions to future states.}. If $\mathbf{P}^a$ is a $|\mathcal{X}| \times |\mathcal{X}|$ Markov chain that describes the probability of transitioning from state $x_k$ to $x_{k+1}$ under the execution of action $a_k=a$ then $\mathcal{P}$ defined as the set $\mathcal{P}=\left\{  \mathbf{P}^a : a \in \mathscr{A} \right\}$. Note that each entry of $\mathbf{P}^a$ describes the probability $\mathscr{P}(x_{k+1}=j|x_k=i,a_k=a)$ where $i$ and $j$ refer to the row and column index, respectively. If $N=|\mathcal{X}|$, each row $i$ requires its first ($N-1$) entries to lie \emph{within} a $(N-1)$-dimensional unit simplex such that $\sum_{j=1}^{N-1} \mathbf{P}_{ij}^a \leq 1$ and $\mathbf{P}_{ij}^a \geq 0$ where the last entry is free to vary accordingly $\mathbf{P}_{iN}^a  = 1 - \sum_{j=1}^{N-1} \mathbf{P}_{ij}^a$. This is the same as constraining it to lie \emph{on} a $N$-dimensional unit simplex $\mathbf{\Delta}^{(N)} = \left\{ (x_1,\cdots,x_N): \sum_{j=1}^N x_j = 1, x_j \geq 0 \, \forall j \in \mathbb{Z}\cap [1,N]  \right\}$. This paper will use the convention $\mathbf{P}_i^a \in \mathbf{\Delta}^{(N-1)}$ to denote the fact that the first $(N-1)$ independent elements lie within the unit simplex while the last dependent term satisfies the constraint that all entries sum to one.
    \item[$\mathcal{C}$\quad] is the cost model is is defined to be the set $\mathcal{C} = \left\{ \vec{C}^a: a \in \mathscr{A} \right\}$ where $\vec{C}^a$ are \emph{row-vectors} of length $|\mathcal{X}|$ that describe the \emph{lump-sum cost} incurred upon entering a state. The lump-sum cost at state $x_k$ consists of a \emph{holding cost} $C_H: \mathcal{X} \times \mathscr{A} \to \mathbb{R}_{\geq 0}$ and \emph{transition cost} $C_T: \mathcal{X} \times \mathcal{X} \times \mathscr{A} \to \mathbb{R}_{\geq 0}$ such that $\vec{C}^a_i = C(i,a) = C_H(i,a) + \sum_{j\in \mathcal{X}} \mathbf{P}_{i,j}^a C_T(i,j,a)$. It is often the case that the holding cost does not depend on the action taken $C_H: \mathcal{X} \to \mathbb{R}_{\geq 0}$ and is thus independent of the policy.
\end{itemize}
\end{definition}

A MDP is controlled by a policy $\pi$ that through querying it for an action $a_k$ after a transition from $x_{k-1}$ to $x_{k}$ has completed. The most general policy requires the entire history of the system up to state $x_k$ as an argument an returns a probability distribution over $\mathscr{A}$ where infeasible actions are allocated a zero probability. Such a history-dependent stochastic policy is \emph{intractable} to compute. Fortunately, it has been shown that there exists a stationary, deterministic and Markov policy $\pi_{MD}: \mathcal{X} \to \mathscr{A}$ that performs at least as well as the best history-dependent stochastic policy (see proposition 1.1.1 of\cite{BertsekasVol2}). Hence, without any loss of performance this paper restricts itself to such a class of policies where a $|\mathcal{X}|$-length \emph{column} vector $\vec{\pi}$ will be defined to have each entry given by $\pi_{MD}(x)$. In the sequel, all vectors are assumed to be column vectors and $\pi$ will be assumed to denote $\pi_{MD}$.\\

A policy serves as instructions to build the model $\left(\mathbf{P}^{\pi},\vec{C}^{\pi}\right)$ from $\mathcal{P}$ and $\mathcal{C}$. In other words, the transition model is constructed row-wise as $\mathbf{P}_i^{\pi} = \mathbf{P}_{i}^{a=\pi(i)}$ while the cost model is constructed element-wise $\vec{C}_i^{\pi} = \vec{C}_i^{a=\pi(i)}$. This model is then used to \emph{evaluate} one of the state-based performance metrics ($J$ and/or $h$) as discussed in the next two sections. A policy $\pi^* \in \Pi = \mathscr{A}^{|\mathcal{X}|}$ is deemed optimal if $\forall \pi \in \Pi, \forall x \in \mathcal{X}: \, J_{\pi^*}(x) \leq J_{\pi}(x)$. Such an optimal stationary, deterministic and Markov policy can be obtained using numerical iterative algorithms such as \emph{Value Iteration} or \emph{Policy Iteration} \cite{BertsekasVol2,gosavi2015simulation}. Alternatively, the MDP can be cast as a \emph{Linear Program} \cite{puterman2014markov,sheskin_markov} for which several solution methods exist such as the \emph{Simplex algorithm} \cite{hillier_lieberman_OR_book}.

\subsubsection{Infinite-horizon discounted cost}\label{section: discounted cost}

The discounted infinite horizon cost for a given state $x \in \mathcal{X}$ under policy $\pi$ is defined as
\begin{eqnarray}
    J_{\pi}^\alpha(x) & = &  \mathbbm{E}\left[ \sum_{k=0}^\infty C\left(x_k,\pi(x_k)\right)\,\bigg|\, x_0 = x \right]
\end{eqnarray} 
which can be written in matrix form
\begin{eqnarray}
\vec{J}_{\pi}^\alpha & = & \sum_{k=0}^\infty \alpha^k \mathbf{P}_{\pi}^k \vec{C}_{\pi}\\
& = & \left(  \sum_{k=0}^\infty \alpha^k \mathbf{P}_{\pi}^k  \right) \vec{C}_{\pi} \\
& = & \left( \mathbf{I} - \alpha \mathbf{P}_{\pi} \right)^{-1} \vec{C}_{\pi} \label{eq: discounted bellman policy evaluation}
\end{eqnarray}
Equation~(\ref{eq: discounted bellman policy evaluation}) is referred to as the \emph{discounted Bellman policy evaluation equations}. This set of linear equations has a \emph{unique solution} and the matrix inverse always exists for $\alpha < 1$ as the largest eigenvalue of $\mathbf{P}_{\pi}$ always lies in the unit circle $\rho_1(\alpha \mathbf{P}_{\pi}) = \alpha \rho_1(\mathbf{P}_{\pi}) = \alpha < 1$ where $\rho_1(\cdot)$ denotes the largest/dominant eigenvalue.\\

\subsubsection{Infinite-horizon average cost}\label{section: average cost}

The infinite horizon average-cost state-value functions are similarly defined as
\begin{eqnarray}
J_{\pi}(x) = \lim_{N\to\infty} \frac{1}{N} \mathbbm{E}\left[ \sum_{k=0}^{N-1} C\left(x_k,\pi(x_K)\right) \,\bigg|\, x_0 = x \right]
\end{eqnarray}
where the matrix formulation follows as
\begin{eqnarray}
    \vec{J}_{\pi} & = & \lim_{N \to \infty}\frac{1}{N} \sum_{k=0}^{N-1} \mathbf{P}_{\pi}^k \vec{C}_{\pi} \\
    & = & \left(  \lim_{N \to \infty}\frac{1}{N} \sum_{k=0}^{N-1} \mathbf{P}_{\pi}^k \right)\vec{C}_{\pi} \\
    & = & \mathbf{P}_{\pi}^* \vec{C}_{\pi}. \label{eq: average costs state-value}
\end{eqnarray}

In equation~(\ref{eq: average costs state-value}), the matrix $P^*$ is called the \emph{Cesaro limit} and for ergodic chains $\mathbf{P}_{\pi}$ it is the \emph{limiting matrix} $\mathbf{P}_{\pi}^* = \lim_{N \to \infty}\frac{1}{N} \sum_{k=0}^{N-1} \mathbf{P}_{\pi}^k =\lim_{N \to \infty} \mathbf{P}_{\pi}^N  $ such that each row is the \emph{limiting distribution} $\vec{p}_{\infty}$ (see appendix A4 of \cite{puterman2014markov}). Moreover, if $\mathbf{P}$ is \emph{ergodic} (see chapter 7 of \cite{cassandras_book}) then $\vec{p}_{\infty}=\vec{\phi}_{\pi}$ which is the solution to $\vec{\phi}_{\pi} =  \mathbf{P}_{\pi}^T\vec{\phi}_{\pi}$ with the constraint that $ \mathbf{1}^T \vec{\phi}_{\pi} = 1$ such that $\vec{\phi}_{\pi}$ is the \emph{stationary distribution}. Note that $\mathbf{1}$ is a column vector of ones, $\mathbf{I}$ is the \emph{identity matrix} and $\vec{\phi} \in \mathbf{\Delta}^{(|\mathcal{X}|-1)}$ is a column vector. From this it can be seen that $\mathbf{P}_{\pi}^* = \mathbf{1} \,(\vec{\phi}_{\pi})^T$. A result that further pertains to a chain with a \emph{single recurrent class} is that $ \forall x,y \in \mathcal{X}: \, J(x) = J(y) = J$ such that $\vec{J} = J \mathbf{1}$. Hence, the following expressions holds for this \emph{uni-chain} case
\begin{eqnarray}
    J_{\pi} & = & \left(\vec{\phi}_{\pi}\right)^T \vec{C}_{\pi}  \label{eq: scalar J}\\
    \vec{J}_{\pi}& = &\mathbf{1} \left(\vec{\phi}_{\pi}\right)^T\vec{C}_{\pi} \label{eq: vector J} \\
    & = & \mathbf{P}_{\pi}^* \vec{C}_{\pi} \nonumber .
\end{eqnarray}

While equations (\ref{eq: vector J}) and (\ref{eq: average costs state-value}) seem useful and intuitive, they do not provide much insight in formulating Bellman equations or providing a \emph{Dynamic Programming} solution. However, the literature on Markov Decision Processes does provide Bellman Equations for the average cost case \cite{BertsekasVol2,gosavi2015simulation,puterman2014markov}. The Bellman equations are different from the discounted-case in that they do not have a unique solution. Instead an infinite family of solutions exist which differ by some additive constant. To further the discussion, these Bellman equations are provided below as for the general case (more than one recurrent class)
\begin{eqnarray}
    \vec{J}_{\pi} & = & \mathbf{P}_{\pi}  \vec{J}_{\pi} \label{eq: av bellman 1}\\
    \vec{J}_{\pi} + \vec{h}_{\pi}& = & \vec{C}_{\pi} + \mathbf{P}_{\pi}  \vec{h}_{\pi} \label{eq: av bellman 2}.
\end{eqnarray}
It immediately becomes clear that \emph{two} systems of equations are given than the usual single system as in the discounted case. In the average-cost literature, $\vec{J}_{\pi}$ is referred to as the \emph{gain} and $\vec{h}_{\mu}$ as the \emph{bias}. The reasoning behind these terms will hopefully become clear. The solution to equations~(\ref{eq: av bellman 1}) and (\ref{eq: av bellman 2}) is not unique. More specifically, the bias can take on any additive constant $d$ that satisfies $\vec{d} = \mathbf{P}_{\pi}\vec{d}$ such that $(\vec{J}_{\pi},\vec{h}_{\pi}+\vec{d})$ is a \emph{gain-optimal} solution (see proposition 5.1.9 of \cite{BertsekasVol2}).

In the special case where only one recurrent class exists, the solution can be simplified. A single set of Bellman equations can be solved for in evaluating a policy
\begin{equation}
    \vec{h}_{\pi} + J\mathbf{1} = \vec{C}_{\pi} + \mathbf{P}_{\pi}  \vec{h}_{\pi}  \label{eq: unichain average reward bellman}
\end{equation}
where there are $|\mathcal{X}|+1$ unknowns and $|\mathcal{X}|$ equations. This issue of an \emph{undetermined} system is resolved in various ways (see chapter 8 of \cite{puterman2014markov}, chapter 6 of \cite{gosavi2015simulation} and section 5 of \cite{BertsekasVol2}). The most common approach is to select a \emph{distinguished state} $x^\# \in \mathcal{X}$ such that $h_{\mu}(x^\#) = 0$ which results in \emph{relative Policy Iteration} and \emph{relative Value Iteration} algorithms.

An important caveat\label{text: gain optimal set} of the uni-chain average-cost Bellman policy equations is that while $(J,\vec{h})$ is a unique solution under $\pi$; it cannot be said that $J$ is unique to $\pi$. The uniqueness stems from $\vec{h}$ being unique (see proposition 7.4.1 of \cite{BertsekasVol1}). This can be heuristically explained through viewing (\ref{eq: av bellman 2}) or (\ref{eq: unichain average reward bellman}) as an analog of (\ref{eq: discounted bellman policy evaluation}) once the gain has been solved for. Hence, in solving for $\vec{h}$ as if it were $\vec{J}$ then the uniqueness result carries over from (\ref{eq: discounted bellman policy evaluation}). Various policies may have the same gain but a different an unique bias. Algorithms such as \emph{Policy Iteration} perform policy improvement in two parts \cite{average_reward_RL}. First, the gain is optimised until $J_{k+1} = J_{k}$ and $h_{k+1}(x)\leq h_k(x), \, \forall x \in \mathcal{X}$. This can be viewed as the algorithm tuning the policy as to find the optimal recurrent class with optimal actions. The second parts then improves the bias until $h_{k+1}(x)= h_k(x), \, \forall x \in \mathcal{X}$ after which the algorithm terminates. In other words, it is optimising the prescribed actions over the transient states. All policies in the second part $\pi \in \Pi_2$ are gain-optimal and share the same value $J^*$ such that $J^* < J_{\pi}, \, \forall \pi \in \Pi\setminus \Pi_2 $. As there are a finite number of policies due to the assumption that the state-space and action-set is finite \cite{BertsekasVol1,BertsekasVol2}, a single policy $\pi \in \Pi_2$ will have an optimal bias $\vec{h}^*$.

\subsection{Hypothesis tests}

To interpret/investigate data an \emph{assumption} is usually made and \emph{tested}. Such an assumption is referred to as the \emph{null hypothesis} $H_0$ and can be rejected in favour of an \emph{alternative hypothesis} $H_A$ (another logical assumption). A statistical test can only reject the null hypothesis --- it cannot be accepted nor is the alternative hypothesis accepted upon its rejection. Hence the only outcomes are \emph{rejection} or \emph{failure to reject}. A statistical test derives a statistic which is used to report a $p$-value. This $p$-value is often acquired through querying a dedicated table with the relevant statistic. A $p$-value is defined as the probability of obtaining a datum at least as extreme as that found in the observed sample set \cite{nonparametric_tests_step_by_step,rice_stats}. It is used in conjunction with a significance level $\zeta \in (0,1)$ which is usually set to $\zeta=0.05$. If $p\leq \zeta$ then a result is said to be \emph{significant} such that the null hypothesis is rejected. Otherwise, it is not significant and the test fails to reject the null hypothesis.

\subsubsection{Student's t-test}\label{section: t test}
The \emph{one-sample} Student's $t$-test is a \emph{parametric} statistical test that assesses the null hypothesis that the \emph{sample mean} $\bar{X}$ is equal to some hypothesized \emph{population mean} $\mu$ \cite{rice_stats}. Mathematically, this can be expressed as $H_0: \bar{X} = \mu$. The $t$-statistic is computed as
\begin{equation}
    t = \frac{\left(\bar{X}-\mu\right)\sqrt{N}}{\hat{\sigma}}
\end{equation}
where $N = |X|$ is the sample size and $\hat{\sigma}$ is the sample standard deviation which uses $N-1$ as a denominator
\begin{equation}
    \hat{\sigma} = \left(\frac{\sum_{x\in X} (x-\bar{X})^2}{N-1}  \right)^{\frac{1}{2}}. \label{eq: sample std}
\end{equation}
As this is a parametric test, the $p$-value can be obtained from the Student's $t$-distribution $F_t$ directly where $F_t$ is a cumulative distribution function with $N-1$ degrees of freedom. If $|t|$ denotes the absolute value then a test against the \emph{two-sided} alternative hypothesis $H_A: \bar{X} \neq \mu$ has a $p$-value determined as $p_{\neq} = 2(1-F_t(|t|))$. The one-sided alternatives follows as $p_{<} = F_t(t)$ for $H_A: \bar{X} < \mu$ and $p_{>} = 1 - F_t(t)$ for $H_A: \bar{X} > \mu$. This test traditionally assumes that $X\sim \mathcal{N}$ is normally distributed \cite{rice_stats}. In practice, this assumption is often violated to some minor degree especially if the data-sets are considered large $N\geq100$ \cite{normality_assumption}.

\subsubsection{Welch's t-test}\label{section: welch}

The Welch's $t$-test is a parametric two-sample statistical test that assesses the null hypothesis of whether two samples have equal means $H_0: \bar{X} = \bar{Y}$ under the assumption that $\operatorname{var}(X)\neq \operatorname{var}(Y)$ \cite{welchs_t_test}. This assumption distinguishes it from the two-sample unpaired $t$-test. It requires a $t$-statistic
\begin{equation}
    t = \frac{\bar{X}-\bar{Y}}{\sqrt{s_X^2 + s_Y^2}}
\end{equation}
where $s^2$ is the \emph{sample standard-error} that can be obtained form the sample's standard deviation~(\ref{eq: sample std})
\begin{equation}
    s^2 = \frac{\sigma}{\sqrt{N}}.
\end{equation}
The $p$-values are obtained from a $t$-distribution $F_t$ with $\xi$ degrees of freedom. If $\xi_X = N_X - 1 = |X| - 1$ and $\xi_Y = N_Y - 1 = |Y| - 1$ then 
\begin{equation}
    \xi = \frac{\left( \frac{\sigma_X^2}{N_X} +\frac{\sigma_Y^2}{N_Y}  \right)^2}{\frac{\sigma_X^4}{N_X^2 \xi_X} +\frac{\sigma_Y^4}{N_Y^2 \xi_Y}}
\end{equation}
Using $F_t$ and $t$, one can obtain $p_{\neq}$, $p_{<}$ and $p_{>}$ as in the previous section. This test assumes that both $X$ and $Y$ are normally distributed and that in $X$ and $Y$ are unpaired. Despite the normality assumption, it has been reported to be robust against skewed distributions if the sample sizes are large \cite{robust_t}.

\subsubsection{Mann-Whitney U-test}\label{section:mann-whitney}
The Mann-Whitney $U$-test is a \emph{non-parametric}\footnote{This means that it sis not based in a parameterised probability distribution. Hence, tables play an important role in obtaining the $p$-values.} statistical test that assesses the null hypothesis that two samples $X\sim F_X$ and $Y\sim F^Y$ have the \emph{same distribution} where $F$ is a cumulative distribution function \cite{nonparametric_tests_step_by_step,rice_stats}. This can be mathematically expressed as $H_0: P(x\in X > y \in Y) = 1/2 =P(x\in X < y \in Y)$. This is the same as claiming the two distributions to be \emph{stochastically equal} $X \stackrel{s.t.}{=} Y$ such that $\forall z \in Z: F_X(z) = F_Y(z)$ where $Z$ is the support $X\cup Y \subseteq Z$. The $U$-statistic is obtained as
\begin{equation}
    \mathcal{U}(X,Y) = \sum_{x \in X}\sum_{y\in Y} S(x,y)
\end{equation}
where
\begin{equation}
    S(x,y) = \begin{cases}
    1, & x > y\\
    \frac{1}{2} , & x = y \\
    0, & x < y
    \end{cases}
\end{equation}
such that the final statistic is $U = \min\left\{\mathcal{U}(X,Y),\mathcal{U}(Y,X)\right\}$.

\subsubsection{Normality test}\label{section:normality test}
This paper uses the \emph{D'Agostino's} $k^2$ \emph{test} to assess the null hypothesis that a sample is normally distributed $H_0: X \sim \mathcal{N}$ \cite{norm_test}. The statistic of interest is $k^2 \sim \chi^2(\xi=2)$ such that it is a parametric test. This statistic is based on transforms of the \emph{skewness} $g_1$ and \emph{kurtosis} $g_2$ of the sample such that 
\begin{equation}
    k^2 = Z_1(g_1)^2 + Z_2(g_2)^2.
\end{equation}
If the $i^{th}$ \emph{central sample moment} is defined to be 
\begin{equation}
    m_i =\frac{1}{|X|} \sum_{x\in X}(x-\bar{X})^i
\end{equation}
then the skewness is determined as
\begin{equation}
    g_1 = \frac{m_3}{\left(m_2\right)^{\frac{3}{2}}}
\end{equation}
while kurtosis similarly follows
\begin{equation}
    g_2 = \frac{m_4}{\left(m_2\right)^2}.
\end{equation}
Sample skewness and kurtosis have distributions which are asymptotically normal such that central moments for these distributions $m_i(g_k)$ were derived in \cite{pearson1931_normality_test}. These expressions are all defined in terms of the sample size $N=|X|$. Although as asymptotically normal, transformations $Z_1$ and $Z_2$ are used to make the distributions as close to a standard normal as possible. Different transforms exist but the most widely used are
\begin{equation}
    Z_1(g_1) = \frac{1}{\ln{(W)}}\sinh^{-1}{\left(g_1\sqrt{\frac{W^2-1}{2 \,m_2(g_1)}}\right)}
\end{equation}
where
\begin{eqnarray}
     m_2(g_1) & = & \frac{6(N-2)}{(N+1)(N+2)}\\
     W & = & \sqrt{2\,G_2(g_1)+4} -1\\
     G_2(g_1) & = & \frac{36(N-7)(N^2+2N-5)}{(N-2)(N+5)(N+7)(N+9)}
\end{eqnarray}
such that $m_2(g_1)$ and $G_2(g_1)$ are the variance\footnote{This is because the mean is zero $m_1(g_1)=0$.} and kurtosis of $g_1$, respectively. The second transform follows as
\begin{eqnarray}
    Z_2(g_2) = \sqrt{\frac{9A}{2}}\left( 1 - \frac{2}{9A}  - \sqrt[3]{\frac{1 -\frac{2}{A}}{1 + \frac{g_2 - m_1(g_2)}{\sqrt{m_2(g_2)}}\times \sqrt{\frac{2}{A-4}}}   }  \right)
\end{eqnarray}
where 
\begin{eqnarray}
    m_1(g_2) & = & -\frac{6}{N+1} \\
    m_2(g_2) & = & \frac{24N(N-2)(N-3)}{(N+1)^2(N+3)(N+5)}\\
    A & = & 6 + \frac{8}{G_1(g_2)}\left(\frac{2}{G_1(g_2)} + \sqrt{1 + \frac{4}{(G_1(g_2))^2}}   \right)\\
    G_1(g_2)  & = &  \frac{6(N^2-5N+2)}{(N+7)(N+9)}\sqrt{ \frac{6(N+3)(N+5)}{N(N-2)(N-3)}  }
\end{eqnarray}
such that $G_1(g_2)$ is the skewness of $g_2$. If $F$ is the cumulative distribution function of $\chi^2(\xi=2)$ then $p=1 -F(k^2)$.

\subsection{Literature review}

This paper was motivated by the fact that a system-based performance metric or means of comparing MDP policies through a scalar could not be found. Hence, there is a lack of similar literature. However, assessing the significance of optimality through hypothesis tests is not new and is abundant in \emph{statistical decision theory} (see part two of \cite{parmigiani_decision_theory}). Furthermore, the use of distributions in MDPs is not novel either. Distributions over state value functions of MDPs in \emph{Reinforcement Learning} have shown promising results \cite{bellemare_distributional_MDP}. Such distributions can be \emph{parametric} \cite{morimura_parametric} or \emph{non-parametric} \cite{morimura_nonparametric}. The latter has been used in \emph{empirical Dynamic Programming} \cite{haskell_empirical_DP}. Distributions allow for a host of additional statistics other than the mean to be evaluated and used in evaluating an objective function. The inclusion of variance and risk were the key motivations behind \cite{morimura_parametric,morimura_nonparametric}.

\newpage

\section{Scalar MDP performance metrics}

This section questions whether the usual state-based optimality criteria used in algorithms that solve for the optimal infinite-horizon policies imply the system to be optimal in an overall sense as measured by some scalar.

\subsection{State-based and system-based optimality}

The goal of any Markov Decision Process (MDP) algorithm is to find the optimal policy $\pi^* \in \Pi$ where $\Pi = |\mathscr{A}|^{|\mathcal{X}|}$ is the finite policy space and $\mathcal{A}$ is the set of actions.  A MDP policy $\pi^* \in \Pi$ is deemed optimal if for any generic\footnote{Generic refers to this including not only both the discounted and average cost infinite horizon problems but also finite horizon problems} $\vec{J} \in \mathcal{J} = \mathbb{R}^{|\mathcal{X}|}$ if the following \emph{partial ordering} is satisfied
\begin{eqnarray}
    \forall \pi \in \Pi\setminus\{\pi^*\}, \forall x \in \mathcal{X}: & & J_{\pi^*}(x) \leq J_{\pi}(x)\label{eq: state-based optimality} \\
    \forall \pi \in \Pi\setminus\{\pi^*\}: & & \vec{J}_{\pi^*} \preceq  \vec{J}_{\pi}
\end{eqnarray}
where at least one \emph{strict inequality} $<$ holds \cite{BertsekasVol2,suttonRLbook,gosavi2015simulation}. As $\Pi$ is finite, concern need not be given to the case where $\forall \pi,\pi' \in \Pi_A: \,\vec{J}_{\pi'} \preceq  \vec{J}_{\pi} $ and $\forall \pi,\pi' \in \Pi_B: \,\vec{J}_{\pi'} \succ  \vec{J}_{\pi} $ such that $\Pi_A \cap \Pi_B = \varnothing$ and $\Pi_A \cup \Pi_B = \Pi$ (see chapter 5.6 of \cite{BertsekasVol2}).  A policy that is \emph{stationary, deterministic} and \emph{history-independent} (Markov)  $\pi^*: \mathcal{X} \to \mathscr{A} $  will allocate a single optimal action for each state. Hence, the system acts optimally in all states given its performance metric/objective function. Intuition would suggest that acting optimally in each state should yield an optimal system in the long-run. More specifically, for the infinite horizon problems, a stationary system that satisfies the \emph{state-based optimality} (\ref{eq: state-based optimality}) should satisfy some overall optimality criterion. Moreover, if the system has a stationary distribution $\vec{\phi} \in \Phi = \mathbf{\Delta}^{|\mathcal{X}|-1}$ then it will visit each state $x$ according to this distribution where it will execute an action $\pi(x)$, incur a one-step cost $C(x)$ and be allocated a state-value function $J(x)$ (its expected long-run performance). Such reasoning suggests that a stationary system should have the following \emph{system-based optimality} criterion
\begin{eqnarray}
    \eta_{\pi} = \left(\vec{\phi}_{\pi}\right)^T \vec{J}_{\pi} \label{eq: system-based optimality}
\end{eqnarray}
where $\eta: \Phi \times \mathcal{J} \to \mathbb{R}$. To be precise, (\ref{eq: system-based optimality}) will be referred to as \emph{stationary system-based optimality}. This will avoid confusion with its proposed alternative, \emph{uniform system-based optimality}, as defined below
\begin{equation}
\nu_{\pi} = \vec{U}^T \vec{J}_{\pi} \label{eq: uniform system-based optimality}
\end{equation}
where $\vec{U} = (|\mathcal{X}|)^{-1}\mathbf{1}$ is a vector of uniform probabilities. The stationary metric assumes the policy to have been in operation long enough to have induced the ergodic system to be in its stationary regime. Hence, the performance is only observed once transience has been removed. In contrast, the uniform metric attempts to account for transience and assumes a system is observed with no prior knowledge of what state it might be in. The policy is then executed which eventually drives the system into its stationary regime. 

It can be said that (\ref{eq: system-based optimality}) is suitable for gauging the average system-based performance while (\ref{eq: uniform system-based optimality}) accounts for the transient operation of the system under the fixed policy until the stationary regime is entered. This suggests a weighted metric
\begin{equation}\label{eq: hybrid metric}
\xi_{\pi}  = \theta \eta_{\pi} + (1-\theta) \nu_{\pi}
\end{equation}
where $\theta \in [0,1]$ suggests an importance of long-run performance over transient performance. For a system that takes relatively long to exit transience, $\theta$ should be reduced. Due to the setting of $\theta$ requiring research of its own, the use of this hybrid metric is outside the scope of this paper. Further work in this metric is suggested as it is most likely the most appropriate of the metrics.

It would be hoped for that state-based optimality would imply system-based optimality. Moreover, it is the latter that those responsible for the system would pay for in the long-run. The next two sections investigates this.

\begin{remark}
State-based optimality (\ref{eq: state-based optimality}) along with \emph{state-independence} forms the foundation of the \emph{Policy Improvement Theorem} \cite{suttonRLbook}. Briefly, if a policy can be changed to $\pi'$ at state $i$ by greedily selecting $\pi'(i) = a \in \mathscr{A}$ such that $J_{\pi'}(i) \leq J_{\pi}(i)$ without affecting other states $J_{\pi'}(k) = J_{\pi}(k), \, \forall k \in \mathcal{X}\setminus \{i\}$ then the theorem holds. This theorem is essentially the policy improvement step of Policy Iteration where all states undergo improvement as described above. The theorem is also found in Value Iteration but only the newly evaluated state has improvement performed on it.  
\end{remark}

\subsection{Average-cost optimality}\label{section: average-cost optimality}

This paper restricts itself to MDPs that have policies $\pi \in \Pi$ that result in $\mathbf{P}_{\pi}$ being uni-chain. Doing so has the following outcomes:
\begin{itemize}
	\item The gain is state-independent such that $\vec{J} = J\mathbf{1}$ as in equation~(\ref{eq: unichain average reward bellman}). The intuition behind this follows from \cite{average_reward_RL}. The recurrent states are visited infinitely often and as such the expected cost across these states cannot differ. Meanwhile, the finite cost incurred during passage between the transient states become negligible in the limit of the infinite horizon problem. 
	\item If the uni-chain consists of a single recurrent class then it is recurrent chain\footnote{Additional assumptions such as aperiodicity ensures it to be ergodic \cite{stewart2009probability,cassandras_book}}. This means that no transient states exist such that all states of the MDP achieve the same gain. Recalling the discussion of average-cost Policy Iteration from page~\pageref{text: gain optimal set}, no second stage occurs as there are not transient states to optimise the bias over. Hence, $|\Pi_2|=1 $ such that there is a unique gain-optimal policy\label{text: unique gain optimal policy}.
\end{itemize}

Ensuring\label{text: enusre unichain} the uni-chain condition in practice is not straightforward. In fact, it has been shown in \cite{tsitsiklis_2007_unichain} that finding all policies that construct a uni-chain $\textbf{P}_{\pi}$ from $\mathcal{P}$ is an NP-hard problem. However, the \emph{weak accessibility} condition (see definition 5.2.2 of \cite{BertsekasVol2}) can be verified in polynomial time as to confirm a single state-independent gain $J$. It should be noted that in simple problem where $\mathbf{P}_a \in \mathcal{P}$ are all uni-chain and have the same structure (i.e. same recurrent class and transient states) such that they only differ in the values of their non-zero entries then the uni-chin assumption holds. This can be further extended to the case where $\mathbf{P}_a \in \mathcal{P}$ are all recurrent in which case $|\Pi_2|=1$ and a unique gain-optimal policy is guaranteed. While the gain is generally of greater interest than the bias, $\vec{h}$ is an important concept to discuss and optimise when possible. Doing so leads to better transient behaviour \cite{BertsekasVol2}.

The bias at some state $x$ is defined as
\begin{equation}
h_{\pi}(x) =  \lim_{N \to \infty} \mathbbm{E}\left[\sum_{k=0}^{N-1} \left( C\left(x_k,\pi(x_k)\right) - J_{\pi} \right) \big| x_0 = x \right]  \label{eq: bias}
\end{equation}
which can be written in matrix notation as
\begin{eqnarray}
\vec{h}_{\pi} & = & \sum_{k=0}^{\infty} \left( \mathbf{P}_{\pi}^k \vec{C}_{\pi} - J \mathbf{1}  \right) \label{eq: bias matrix} \\
& = & \vec{C}_{\pi} - J \mathbf{1} + \sum_{k=1}^{\infty} \left( \mathbf{P}_{\pi}^k  - \mathbf{P}^*_\pi  \right) \vec{C}_{\pi} \nonumber \\
& = &  - J \mathbf{1} +\vec{C}_{\pi} +  \sum_{k=1}^{\infty} \left( \mathbf{P}_{\pi}  - \mathbf{P}^*_\pi  \right)^k \vec{C}_{\pi} \label{step: pstar} \\
& = & - J \mathbf{1} +\sum_{k=0}^{\infty} \left( \mathbf{P}_{\pi}  - \mathbf{P}^*_\pi  \right)^k \vec{C}_{\pi} \nonumber\\
& = & \left(  \mathbf{I} - \mathbf{P}_{\pi} + \mathbf{P}^*_\pi  \right)^{-1} \vec{C}_{\pi} -  J \mathbf{1}\nonumber
\end{eqnarray}
where (\ref{step: pstar}) made use of the fact that for $k \geq 1$ it holds that $\mathbf{P}^k - \mathbf{P}^* = (\mathbf{P} - \mathbf{P}^*)^k$. The intuitive definitions of bias in (\ref{eq: bias}) and (\ref{eq: bias matrix}) were not formulated as a matter of convenience but can be derived from the \emph{Laurent series expansion} of the discounted-cost Bellman Equations (\ref{eq: laurent series 1}). Algorithms such Policy Iteration optimises $\vec{h}_{\pi_2}$ among the candidates $\pi_2 \in \Pi_2$ such that a \emph{bias-optimal} policy $\pi^*$ is found: $\vec{h}_{\pi^*} \preceq \vec{h}_{\pi_2}  $ where at least one strict equality must hold in the partial ordering. There exist a single unique bias-optimal policy (as discussed on page~\pageref{text: gain optimal set}) which is naturally a gain-optimal policy. Only if the uni-chain is a recurrent chain does uniqueness hold for both the terms gain-optimal and bias-optimal. State-based gain optimality can now be related to stationary system-based optimality.

\begin{theorem}\label{theorem: average costs implies stationary system optimality}
For the infinite horizon average-cost MDP that has an optimal policy $\pi^* \in \Pi_2 \subset \Pi$ which induces a uni-chain $\mathbf{P}_{\pi^*}$, state-based optimality (gain-optimal) implies stationary system-based optimality 
\begin{equation}
    \forall \pi \in \Pi\setminus\{\pi^*\}: \quad \vec{J}_{\pi^*} \preceq  \vec{J}_{\pi} \implies \forall \pi \in \Pi\setminus\{\pi^*\}: \quad \eta_{\pi^*} \leq \eta_{\pi} \label{eq: average costs system optimality}
\end{equation}
where $\eta_\pi = J_{\pi}$ and $\eta_{\pi^*} = J_{\pi^*}$.
\end{theorem}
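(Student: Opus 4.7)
The plan is to exploit the uni-chain hypothesis to collapse both sides of the implication onto a single scalar. Since $\mathbf{P}_{\pi}$ is uni-chain for every $\pi \in \Pi$, equation~(\ref{eq: unichain average reward bellman}) and the discussion following it tell us that the gain is state-independent, so $\vec{J}_{\pi} = J_{\pi}\mathbf{1}$ for each policy $\pi$. This is the single structural fact that drives everything.

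First, I would simplify the right-hand side of the implication. Substituting $\vec{J}_{\pi} = J_{\pi}\mathbf{1}$ into the definition of the stationary system-based metric (\ref{eq: system-based optimality}) yields
\begin{equation*}
\eta_{\pi} = \bigl(\vec{\phi}_{\pi}\bigr)^T \vec{J}_{\pi} = \bigl(\vec{\phi}_{\pi}\bigr)^T J_{\pi}\mathbf{1} = J_{\pi}\bigl(\vec{\phi}_{\pi}\bigr)^T\mathbf{1} = J_{\pi},
\end{equation*}
where the last equality uses the fact that $\vec{\phi}_{\pi} \in \mathbf{\Delta}^{(|\mathcal{X}|-1)}$ so its entries sum to one. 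The same identity applied to $\pi^*$ gives $\eta_{\pi^*} = J_{\pi^*}$, which already establishes the final clause of the theorem ``$\eta_\pi = J_\pi$ and $\eta_{\pi^*} = J_{\pi^*}$''.

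Second, I would simplify the left-hand side of the implication in exactly the same way. The partial ordering $\vec{J}_{\pi^*} \preceq \vec{J}_{\pi}$ becomes $J_{\pi^*}\mathbf{1} \preceq J_{\pi}\mathbf{1}$, which is equivalent to the scalar inequality $J_{\pi^*} \leq J_{\pi}$. Combining this with the two identities from the previous step yields $\eta_{\pi^*} = J_{\pi^*} \leq J_{\pi} = \eta_{\pi}$, which is exactly (\ref{eq: average costs system optimality}).

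There is essentially no genuine obstacle in the argument; the work is entirely done by the uni-chain assumption. The only point requiring a sentence of care is justifying that the hypothesis ``at least one strict inequality'' in (\ref{eq: state-based optimality}) is compatible with a state-independent gain: under $\vec{J}_{\pi} = J_{\pi}\mathbf{1}$ either every component of $\vec{J}_{\pi^*} - \vec{J}_{\pi}$ is zero or every component is strictly negative, so the partial ordering degenerates to a scalar comparison and no contradiction arises. I would flag in a remark that the theorem makes no claim about the bias, since the reduction deliberately throws away the richer transient information contained in $\vec{h}_{\pi}$.
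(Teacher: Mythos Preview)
Your proposal is correct and follows essentially the same approach as the paper: reduce $\eta_\pi$ to the scalar gain $J_\pi$ via the uni-chain assumption, then observe that the partial ordering $\vec{J}_{\pi^*}\preceq\vec{J}_{\pi}$ collapses to $J_{\pi^*}\leq J_{\pi}$. The only cosmetic difference is that the paper derives $\eta_\pi=J_\pi$ by expanding $\vec{J}_\pi=\mathbf{P}_\pi^*\vec{C}_\pi=\mathbf{1}(\vec{\phi}_\pi)^T\vec{C}_\pi$ and then invoking (\ref{eq: scalar J}), whereas you substitute $\vec{J}_\pi=J_\pi\mathbf{1}$ directly and use $(\vec{\phi}_\pi)^T\mathbf{1}=1$; both routes are one-liners and the logical structure is identical.
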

\begin{proof}
Using the definition of average-cost (\ref{eq: average costs state-value}) and multiplying through by $\vec{\phi}_{\pi}$ as to compute (\ref{eq: system-based optimality})
\begin{eqnarray}
    \eta_{\pi} & = & \left(\vec{\phi}_{\pi}\right)^T \mathbf{P}_{\pi}^* \vec{C}_{\pi} \nonumber\\
    & = & \left(\vec{\phi}_{\pi}\right)^T \mathbf{1} \left(\vec{\phi}_{\pi}\right)^T \vec{C}_{\pi}\nonumber\\
    & = & \left(\vec{\phi}_{\pi}\right)^T \vec{C}_{\pi} \nonumber\\
    & = & J_{\pi} \label{eq: eta equal J}
 \end{eqnarray}
 From the uni-chain condition it is noted that $\forall x \in \mathcal{X}: J_{\pi}(x) = J_{\pi}$ which makes state-based optimality independent of state. It follows that $|\Pi_2| \geq 1$ such that $J_{\pi^*}$ is not guaranteed to be unique to one policy. To summarise,
 \begin{eqnarray}
     \forall \pi \in \Pi\setminus\{\pi^*\}: & &  J_{\pi^*} \leq J_{\pi} \label{eq: optimal J}\\
     \therefore \forall \pi \in \Pi\setminus\{\pi^*\}: & &  \eta_{\pi^*} \leq \eta_{\pi} \label{eq: optimal eta}
 \end{eqnarray}
which completes the proof.
\end{proof}

\begin{corollary}
If $\mathbf{P}_{\pi^*}$ consists of a single recurrent class with no transient states then $|\Pi_2|=1$ such that $J_{\pi^*}$ is unique to a single policy. Hence, \emph{strict inequality} holds
\begin{eqnarray}
     \forall \pi \in \Pi\setminus\{\pi^*\}: & &  J_{\pi^*} < J_{\pi} \label{eq: optimal J strict}\\
     \therefore \forall \pi \in \Pi\setminus\{\pi^*\}: & &  \eta_{\pi^*} < \eta_{\pi} \label{eq: optimal eta strict}
 \end{eqnarray}
 such that strict stationary system-based optimality is achieved
 \begin{equation}
    \forall \pi \in \Pi\setminus\{\pi^*\}: \quad \vec{J}_{\pi^*} \preceq  \vec{J}_{\pi} \implies \forall \pi \in \Pi\setminus\{\pi^*\}: \quad \eta_{\pi^*} < \eta_{\pi} \label{eq: average costs system optimality strict}
\end{equation}
\end{corollary}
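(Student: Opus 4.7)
The plan is to piggyback on the theorem just proven and upgrade its weak inequality $\eta_{\pi^*}\leq\eta_\pi$ to a strict one by invoking the uniqueness of the gain-optimal policy under the stronger recurrence hypothesis. Since the theorem already establishes the identity $\eta_\pi = J_\pi$ for every uni-chain-inducing $\pi$ (via $(\vec\phi_\pi)^T \mathbf{P}_\pi^* = (\vec\phi_\pi)^T \mathbf{1}(\vec\phi_\pi)^T = (\vec\phi_\pi)^T$), the task collapses to showing $J_{\pi^*} < J_\pi$ strictly for every $\pi \neq \pi^*$.

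First I would recall the observation made near the text labelled \emph{unique gain optimal policy} preceding the theorem: when $\mathbf{P}_{\pi^*}$ has a single recurrent class with no transient states, Policy Iteration's bias-optimisation stage has no transient states over which to act, so the set $\Pi_2$ of gain-optimal policies collapses to a singleton, namely $\Pi_2 = \{\pi^*\}$. Combined with the fact stated earlier that $J^* < J_\pi$ for all $\pi \in \Pi\setminus \Pi_2$, this immediately yields (\ref{eq: optimal J strict}).

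Second, applying the identity $\eta_\pi = J_\pi$ from the proof of Theorem~\ref{theorem: average costs implies stationary system optimality} turns (\ref{eq: optimal J strict}) into (\ref{eq: optimal eta strict}), and the implication (\ref{eq: average costs system optimality strict}) follows by simply chaining the state-based gain-optimality hypothesis through these two steps.

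The only delicate point, and the one I expect to dwell on, is justifying $|\Pi_2| = 1$ cleanly rather than handwaving it through the Policy Iteration narrative. The cleanest route is to argue contrapositively: if $\pi' \in \Pi_2$ were distinct from $\pi^*$, then both would be gain-optimal on a chain with no transient states, meaning every state is recurrent under both induced chains; but then the uni-chain Bellman evaluation equations (\ref{eq: unichain average reward bellman}) together with the uniqueness of the pair $(J,\vec h)$ (proposition 7.4.1 of \cite{BertsekasVol1}, already cited) would force $\pi'$ to prescribe the same action as $\pi^*$ at every recurrent state, i.e.\ at every state, contradicting $\pi' \neq \pi^*$. This gives $|\Pi_2|=1$ rigorously and closes the argument.
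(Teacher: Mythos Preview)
Your core argument matches the paper exactly: the paper offers no separate proof for this corollary, relying instead on the identity $\eta_\pi = J_\pi$ established in the preceding theorem together with the earlier textual observation (the passage you cite near the label \emph{unique gain optimal policy}) that a chain with no transient states forces $|\Pi_2|=1$, from which the strict inequality is immediate.

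Your extra paragraph attempting a ``clean'' contrapositive justification of $|\Pi_2|=1$ has a gap, however. Uniqueness of the pair $(J,\vec h)$ as the solution to the evaluation equations~(\ref{eq: unichain average reward bellman}) for a \emph{fixed} policy does not imply that two distinct policies must produce distinct pairs, nor that sharing the same $(J,\vec h)$ forces the policies to coincide action-by-action. Two gain-optimal policies could in principle have identical gain and bias yet differ at a state where two actions induce the same transition row and cost. The paper itself does not attempt this level of rigor --- it simply leans on the Policy Iteration narrative that with no transient states there is no second stage --- and you would be safe doing the same rather than over-committing to an argument that does not quite close.
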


\begin{corollary}
For the uni-chains that consists of a single recurrent class $\forall \pi \in \Pi\setminus\{\pi^*\}:  \eta_{\pi^*} < \eta_\pi \implies \forall \pi \in \Pi\setminus\{\pi^*\} \vec{h}_{\pi^*} \preceq \vec{h}_\pi$.
\end{corollary}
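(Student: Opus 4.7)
The plan is to couple the average-cost bias $\vec{h}_\pi$ to the discounted value function $\vec{J}_\pi^\alpha$ via the Laurent series expansion referenced as equation~(\ref{eq: laurent series 1}), and then to invoke Blackwell optimality in the finite MDP to convert the strict gain dominance of $\pi^*$ into a componentwise bias ordering.

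First I would note that the single-recurrent-class hypothesis renders every $\mathbf{P}_\pi$ ergodic with limiting matrix $\mathbf{P}^*_\pi = \mathbf{1}(\vec{\phi}_\pi)^T$, so each policy admits a Laurent expansion of the form $\vec{J}_\pi^\alpha = \frac{J_\pi}{1-\alpha}\mathbf{1} + \vec{h}_\pi + O(1-\alpha)$ near $\alpha = 1$, with $\vec{h}_\pi$ canonically normalized as $(\mathbf{I} - \mathbf{P}_\pi + \mathbf{P}^*_\pi)^{-1}\vec{C}_\pi - J_\pi\mathbf{1}$. By Theorem~\ref{theorem: average costs implies stationary system optimality} and its strict-inequality corollary, the hypothesis $\eta_{\pi^*} < \eta_\pi$ reduces to $J_{\pi^*} < J_\pi$, and the condition $|\Pi_2| = 1$ identifies $\pi^*$ as the \emph{unique} gain-optimal policy. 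Blackwell's theorem for finite MDPs then supplies a threshold $\alpha_0 \in (0,1)$ and a Blackwell-optimal policy $\pi^\dagger$ with $\vec{J}_{\pi^\dagger}^\alpha \preceq \vec{J}_\pi^\alpha$ componentwise for every $\alpha \in [\alpha_0,1)$ and every $\pi \in \Pi$; because Blackwell optimality implies gain optimality and $\pi^*$ is the unique such candidate, $\pi^\dagger = \pi^*$.

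With these ingredients, the desired bias inequality $\vec{h}_{\pi^*} \preceq \vec{h}_\pi$ should follow by reading off the $(1-\alpha)^0$ coefficient of the uniform inequality $\vec{J}_{\pi^*}^\alpha \preceq \vec{J}_\pi^\alpha$ once the dominant gain contribution has been subtracted away. The hardest step, and the one I anticipate as the principal obstacle, is precisely this coefficient extraction in the presence of strictly unequal gains: the leading $1/(1-\alpha)$ term of $\vec{J}_{\pi^*}^\alpha - \vec{J}_\pi^\alpha$ already diverges to $-\infty$ as $\alpha \uparrow 1$, so Blackwell dominance by itself does not pin down the sign of $\vec{h}_{\pi^*} - \vec{h}_\pi$. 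To close this gap I would attempt a proof by contradiction: suppose $h_{\pi^*}(x) > h_\pi(x)$ at some state $x$; then combining a local action swap at $x$ with the Policy Iteration refinement principle (cf.\ page~\pageref{text: gain optimal set}), which monotonically decreases the bias once the gain-optimal class has been reached, should yield a competitor to $\pi^*$ with the same gain but a strictly smaller bias, contradicting the uniqueness of $\pi^*$ in $\Pi_2$. An alternative route would be a direct $0$-discount-optimality argument on $\pi^\dagger = \pi^*$, but this is delicate because $0$-discount optimality is conventionally defined only against gain-optimal competitors, whereas the corollary compares against every $\pi \in \Pi \setminus \{\pi^*\}$.
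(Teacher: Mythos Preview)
The paper offers no proof of this corollary; it is simply stated after Theorem~\ref{theorem: average costs implies stationary system optimality}. More importantly, the obstacle you flagged in your final paragraph is not a technicality that can be repaired---it is fatal. The corollary, read literally, is false.

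A two-state counterexample suffices. Take $\mathcal{X}=\{1,2\}$, $\mathscr{A}=\{0,1\}$, and let every action induce the same deterministic swap $\mathbf{P}^a=\begin{pmatrix}0&1\\1&0\end{pmatrix}$, so every policy yields an irreducible chain with $\vec{\phi}=(\tfrac12,\tfrac12)^T$. Set costs $C(1,0)=C(2,0)=0$ and $C(1,1)=10$, $C(2,1)=0$. Then $\pi^*\equiv 0$ has $J_{\pi^*}=0$ and canonical bias $\vec{h}_{\pi^*}=(0,0)^T$, while $\pi\equiv 1$ has $J_\pi=5$ and, via $(\mathbf I-\mathbf P+\mathbf P^*)^{-1}\vec{C}_\pi-J_\pi\mathbf 1$, bias $\vec{h}_\pi=(2.5,-2.5)^T$. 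Hence $\eta_{\pi^*}=0<5=\eta_\pi$, yet $h_{\pi^*}(2)=0>-2.5=h_\pi(2)$, so $\vec{h}_{\pi^*}\not\preceq\vec{h}_\pi$. The phenomenon is structural: the canonical bias always satisfies $(\vec{\phi}_\pi)^T\vec{h}_\pi=0$, so every nonconstant bias vector has components of both signs, and there is no reason the zero-mean fluctuations of a gain-suboptimal policy should dominate those of $\pi^*$ componentwise.

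Your Laurent/Blackwell route correctly shows that $\pi^*$ is Blackwell optimal when $|\Pi_2|=1$, but as you suspected, the discounted inequality $\vec{J}_{\pi^*}^\alpha\preceq\vec{J}_\pi^\alpha$ is driven entirely by the diverging gain term $(J_{\pi^*}-J_\pi)/(1-\alpha)$ whenever $\pi\notin\Pi_2$, and carries no information about $\vec{h}_{\pi^*}-\vec{h}_\pi$. Your policy-iteration repair cannot work either: swapping one action of $\pi^*$ produces a policy with \emph{strictly larger} gain (since $|\Pi_2|=1$), so the ``same gain, smaller bias'' contradiction never materialises. Bias optimality and $0$-discount optimality are only meaningful comparisons \emph{within} $\Pi_2$; extending the ordering to all of $\Pi$, as the corollary asserts, is simply not a theorem.
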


\begin{corollary}
Equations (\ref{eq: optimal J}) --- (\ref{eq: optimal eta strict}) along with (\ref{eq: system-based optimality}) further suggest that system-based optimality is completely determined by $\vec{\phi}_{\pi}$ when $\forall \pi \in \Pi: \vec{C}_{\pi} = \vec{C}$ which is the case when only deterministic or stochastic \emph{state costs} are incurred that are policy independent. In other words, no \emph{transition costs} may be incurred. Hence, a policy tunes $\vec{\phi}_{\pi^*}$ as to allocate larger probabilities to low cost states.
\end{corollary}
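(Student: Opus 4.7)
The plan is to combine the identity established within the proof of Theorem~\ref{theorem: average costs implies stationary system optimality} with the policy-independent cost hypothesis and then isolate the dependence of the scalar metric on the stationary distribution alone.

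First I would invoke the chain of equalities in the proof of Theorem~\ref{theorem: average costs implies stationary system optimality}, which gives $\eta_\pi = (\vec{\phi}_\pi)^T \vec{C}_\pi = J_\pi$ (equivalently (\ref{eq: scalar J})). Substituting the hypothesis $\vec{C}_\pi = \vec{C}$ for every $\pi \in \Pi$ then yields $\eta_\pi = (\vec{\phi}_\pi)^T \vec{C}$, so the policy dependence of $\eta_\pi$ is routed entirely through $\vec{\phi}_\pi$. Stationary system-based optimality thereby reduces to the linear objective $\min_{\pi \in \Pi}\,(\vec{\phi}_\pi)^T \vec{C}$ over the attainable stationary distributions, and the gain-optimal policy $\pi^*$ from Theorem~\ref{theorem: average costs implies stationary system optimality} achieves this minimum by (\ref{eq: optimal eta})--(\ref{eq: optimal eta strict}).

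Next I would justify the parenthetical equivalence between $\vec{C}_\pi = \vec{C}$ and the absence of transition costs together with policy-independent holding costs. Using the decomposition $\vec{C}^a_i = C_H(i,a) + \sum_{j \in \mathcal{X}} \mathbf{P}^a_{i,j}\, C_T(i,j,a)$ given in the MDP definition, setting $C_T \equiv 0$ and $C_H(i,a) = C_H(i)$ gives $\vec{C}^a_i = C_H(i)$ for every action $a$, which is manifestly policy-independent. Conversely, allowing a non-trivial transition cost or an action-dependent holding cost generically breaks policy-independence: any two policies that disagree at a state where either $\mathbf{P}^a_i$ or $C_H(i,\cdot)$ varies with $a$ will exhibit a difference in at least one entry of $\vec{C}_\pi$.

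Finally, the ``tuning'' assertion follows by reading the objective component-wise as $(\vec{\phi}_\pi)^T \vec{C} = \sum_{x \in \mathcal{X}} \phi_\pi(x) \, C(x)$: since $\vec{\phi}_\pi$ is a probability vector and $\vec{C}$ is fixed, any transfer of mass from a high-cost state to a low-cost one strictly decreases $\eta_\pi$. The main obstacle, and the reason this part is phrased as a heuristic interpretation rather than a constructive recipe, is that $\vec{\phi}_\pi$ cannot be chosen freely; it must be the stationary distribution of some $\mathbf{P}_\pi$ assembled from $\mathcal{P}$, and the set of attainable stationary distributions is a (generally non-convex) subset of $\mathbf{\Delta}^{(|\mathcal{X}|-1)}$ dictated by the transition structure. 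Hence the corollary should be read as: among the stationary distributions the MDP can realise, $\pi^*$ selects the one concentrating mass on low-cost states as far as the dynamics permit.
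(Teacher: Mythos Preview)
Your proposal is correct. The paper itself offers no separate proof for this corollary; it is stated as an immediate observation flowing from the identity $\eta_\pi = (\vec{\phi}_\pi)^T \vec{C}_\pi$ established in the proof of Theorem~\ref{theorem: average costs implies stationary system optimality}. Your write-up makes explicit the substitution $\vec{C}_\pi = \vec{C}$ and adds two things the paper leaves implicit: the converse direction (that non-trivial transition costs or action-dependent holding costs generically destroy policy-independence of $\vec{C}_\pi$), and the caveat that $\vec{\phi}_\pi$ is constrained to the set of stationary distributions realisable by $\mathcal{P}$ rather than all of $\mathbf{\Delta}^{(|\mathcal{X}|-1)}$. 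Both additions are sound and sharpen the interpretation without altering the underlying argument.
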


\begin{corollary}
In a system with both state and transition costs, an optimal policy tunes both $\vec{\phi}_{\pi^*}$ and $\vec{C}_{\pi^*}$ as to minimise their weighted sum. In other words, the policy achieves its objective by fulfilling the trade-off between spending larger proportions of time in low costs states and traversing to and between low-costs states in inexpensive manner.
\end{corollary}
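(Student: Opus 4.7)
The plan is to unpack the identity $\eta_{\pi} = J_{\pi} = (\vec{\phi}_{\pi})^T \vec{C}_{\pi}$ established in Theorem~\ref{theorem: average costs implies stationary system optimality} (via equation~(\ref{eq: eta equal J})) and expose the explicit policy dependence carried by each of the two factors. Expanding $\vec{C}_{\pi}$ according to its definition in~(\ref{eq:mdp tuple}) gives
\begin{equation}
\eta_{\pi} = \sum_{i \in \mathcal{X}} \phi_{\pi}(i)\,\Bigl[ C_H(i,\pi(i)) + \sum_{j \in \mathcal{X}} \mathbf{P}_{ij}^{\pi}\, C_T(i,j,\pi(i)) \Bigr].
\end{equation}
In contrast to the preceding corollary (where $\vec{C}_{\pi} = \vec{C}$ was policy-independent because $C_T \equiv 0$), the presence of a non-trivial transition cost forces $\vec{C}_{\pi}$ to depend on the policy both through the action argument $\pi(i)$ and through the transition probabilities $\mathbf{P}_{ij}^{\pi}$ weighting $C_T$.

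Next I would note that $\vec{\phi}_{\pi}$ is itself a policy-dependent quantity, determined as the unique solution of $\vec{\phi}_{\pi} = \mathbf{P}_{\pi}^T \vec{\phi}_{\pi}$ subject to $\mathbf{1}^T \vec{\phi}_{\pi} = 1$. Hence $\eta_{\pi}$ is a bilinear functional of two policy-induced vectors $(\vec{\phi}_{\pi}, \vec{C}_{\pi})$, which are coupled through their shared dependence on $\mathbf{P}_{\pi}$. By Theorem~\ref{theorem: average costs implies stationary system optimality} the gain-optimal $\pi^*$ simultaneously minimises $\eta_{\pi}$, so $\pi^*$ must be tuning $\vec{\phi}_{\pi^*}$ and $\vec{C}_{\pi^*}$ jointly; neither factor can be optimised in isolation without perturbing the other through $\mathbf{P}_{\pi}$.

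The trade-off interpretation would then follow by examining extremal cases. A policy that concentrates $\vec{\phi}_{\pi}$ on states with small holding cost may be forced to traverse expensive transitions in order to reach and retain probability mass there, inflating $\vec{C}_{\pi}$; conversely, a policy that minimises $\vec{C}_{\pi}$ by avoiding costly transitions may induce a $\vec{\phi}_{\pi}$ that concentrates on states with large holding cost. The minimum of the bilinear product is therefore attained at the best attainable compromise between the two effects. The main obstacle here is not technical but expository — the corollary is a commentary on the structural form of $\eta_{\pi}$, so the plan is essentially to make the bilinearity and the two-fold policy dependence visible, after which the trade-off is immediate from the definitions rather than from any new inequality.
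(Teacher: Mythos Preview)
Your proposal is correct and matches the paper's approach. The paper itself offers no proof for this corollary --- it is stated as an immediate interpretive consequence of the identity $\eta_{\pi} = (\vec{\phi}_{\pi})^T \vec{C}_{\pi}$ from equation~(\ref{eq: eta equal J}) --- and your write-up simply makes explicit the bilinear structure and two-fold policy dependence that the paper leaves to the reader.
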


The uniform system-based criteria follows

\begin{theorem}
For the infinite horizon average-cost MDP that has an optimal policy $\pi^* \in \Pi_2 \subset \Pi$ which induces a uni-chain $\mathbf{P}_{\pi^*}$, state-based optimality (gain-optimal) implies uniform system-based optimality 
\begin{equation}
    \forall \pi \in \Pi\setminus\{\pi^*\}: \quad \vec{J}_{\pi^*} \preceq  \vec{J}_{\pi} \implies \forall \pi \in \Pi\setminus\{\pi^*\}: \quad \nu_{\pi^*} \leq \nu_{\pi} \label{eq: average costs system optimality uniform}
\end{equation}
where $\nu_\pi = J_{\pi}$ and $\nu_{\pi^*} = J_{\pi^*}$.
\end{theorem}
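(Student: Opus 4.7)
The plan is to mirror the proof of Theorem~\ref{theorem: average costs implies stationary system optimality}, exploiting the fact that under the uni-chain assumption the gain vector collapses to a constant multiple of $\mathbf{1}$. Once this collapse is invoked, the specific weighting vector (uniform versus stationary) becomes immaterial, since any probability vector summed against $J_\pi \mathbf{1}$ returns $J_\pi$.

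First I would expand $\nu_\pi$ using its definition in (\ref{eq: uniform system-based optimality}) with $\vec{U} = |\mathcal{X}|^{-1}\mathbf{1}$, and then substitute $\vec{J}_\pi = J_\pi \mathbf{1}$, which is valid because the uni-chain condition yields state-independent gain via equation~(\ref{eq: unichain average reward bellman}). This gives
\begin{equation}
\nu_\pi \;=\; \vec{U}^T \vec{J}_\pi \;=\; \frac{1}{|\mathcal{X}|}\mathbf{1}^T\bigl(J_\pi \mathbf{1}\bigr) \;=\; \frac{J_\pi}{|\mathcal{X}|}\,\mathbf{1}^T\mathbf{1} \;=\; J_\pi,
\end{equation}
and identically $\nu_{\pi^*} = J_{\pi^*}$. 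This establishes the auxiliary identity claimed in the theorem statement.

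Second, I would invoke state-based (gain) optimality. Since $\vec{J}_{\pi^*}$ and $\vec{J}_\pi$ are both constant vectors in the uni-chain setting, the partial ordering $\vec{J}_{\pi^*} \preceq \vec{J}_\pi$ reduces to the scalar inequality $J_{\pi^*} \leq J_\pi$ as in (\ref{eq: optimal J}). Combining this with the identity $\nu_\pi = J_\pi$ gives $\nu_{\pi^*} \leq \nu_\pi$ for every $\pi \in \Pi\setminus\{\pi^*\}$, which is exactly the implication (\ref{eq: average costs system optimality uniform}).

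There is essentially no main obstacle here: the argument is a direct transcription of the previous theorem's proof, with the stationary distribution $\vec{\phi}_\pi$ replaced by $\vec{U}$. The only subtlety worth noting (and which I would flag in a remark after the proof) is that $\nu_\pi = \eta_\pi$ in this average-cost uni-chain setting, so the choice between uniform and stationary weighting is vacuous for the gain; the distinction becomes meaningful only in the discounted case, where $\vec{J}_\pi^\alpha$ is not constant across states. As with Theorem~\ref{theorem: average costs implies stationary system optimality}, strict inequality can be recovered as an immediate corollary whenever $\mathbf{P}_{\pi^*}$ is a single recurrent class with no transient states, since then $|\Pi_2| = 1$ and $J_{\pi^*}$ is unique to $\pi^*$.
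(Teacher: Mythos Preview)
Your proposal is correct and follows essentially the same argument as the paper: both reduce $\nu_\pi$ to $J_\pi$ via the uni-chain state-independence of the gain, then read off the inequality directly from (\ref{eq: optimal J}). Your added remark that $\nu_\pi = \eta_\pi$ in this setting and the strict-inequality corollary are also exactly what the paper records immediately afterward.
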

\begin{proof}
Using the fact that $\forall x \in \mathcal{X}: J(x) = J$ it follows that
\begin{eqnarray}
\nu_{\pi} & = & \vec{U} (J \mathbf{1})^T  \nonumber\\
& = & \sum_{x \in \mathcal{X}} \frac{J}{|\mathcal{X}|}\nonumber\\
& = & J \label{eq: nu equal J}
\end{eqnarray}
where $\forall \pi,\pi'\in \Pi_2: \, \nu_{\pi'} = \nu_{\pi} = J$ such that \emph{equality} holds but $\forall \pi^* \in \Pi_2, \forall \pi \in \Pi\setminus \Pi_2:\, \nu_{\pi^*} < \nu_\pi$ such that \emph{strict inequality} holds.
\end{proof}

\begin{corollary}
If $\mathbf{P}_{\pi^*}$ consists of a single recurrent class with no transient states then $|\Pi_2|=1$ such that $J_{\pi^*}$ is unique to a single policy and \emph{strict inequality} holds
\begin{equation}
    \forall \pi \in \Pi\setminus\{\pi^*\}: \quad \vec{J}_{\pi^*} \preceq  \vec{J}_{\pi} \implies \forall \pi \in \Pi\setminus\{\pi^*\}: \quad \nu_{\pi^*} < \nu_{\pi} \label{eq: average costs system optimality uniform strict}.
\end{equation}
\end{corollary}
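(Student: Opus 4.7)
The plan is to reduce this corollary directly to the strict-inequality corollary that follows Theorem~\ref{theorem: average costs implies stationary system optimality}, using the identity $\nu_\pi = J_\pi$ already established in the preceding theorem. Nothing new about $\vec{U}$ has to be derived: the computation $\nu_\pi = \vec{U}^T(J\mathbf{1}) = J$ relied only on state-independence of the gain, which is delivered by the paper's running uni-chain assumption (Section~\ref{section: average-cost optimality}). So I can treat $\nu$ as interchangeable with $J$ throughout and only need to promote the non-strict inequality $J_{\pi^*} \leq J_\pi$ to a strict one.

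First I would invoke the structural hypothesis: $\mathbf{P}_{\pi^*}$ consists of a single recurrent class with no transient states. As argued in the bullet on page~\pageref{text: unique gain optimal policy} and in the earlier corollary (equations~(\ref{eq: optimal J strict})--(\ref{eq: optimal eta strict})), this collapses the gain-optimal set to $|\Pi_2|=1$, so $\pi^*$ is the unique gain-optimal policy and every $\pi\in\Pi\setminus\{\pi^*\}$ lies in $\Pi\setminus\Pi_2$ with $J_{\pi^*} < J_\pi$. Substituting $\nu_{\pi^*}=J_{\pi^*}$ and $\nu_\pi = J_\pi$ from the preceding theorem then yields $\nu_{\pi^*} < \nu_\pi$ for every $\pi \neq \pi^*$, which is the claim.

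The main (minor) obstacle is bookkeeping rather than technical: one must be careful that the identity $\nu_\pi = J_\pi$ is available for the \emph{competing} policies $\pi$, not merely for $\pi^*$. The no-transient-states hypothesis is stated only for $\mathbf{P}_{\pi^*}$, so state-independence of $J_\pi$ for $\pi\neq\pi^*$ has to be justified from the broader uni-chain assumption rather than from the corollary's hypothesis. Once that is pointed out, the proof is essentially a one-line substitution, and no further computation is needed beyond citing the earlier corollary for the strict inequality on the gains.
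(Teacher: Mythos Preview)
Your proposal is correct and follows essentially the same route as the paper: the corollary is not given an explicit proof there, but the intended argument is exactly what you describe---use $\nu_\pi = J_\pi$ from equation~(\ref{eq: nu equal J}) together with $|\Pi_2|=1$ (from the single-recurrent-class hypothesis, cf.\ page~\pageref{text: unique gain optimal policy}) to eliminate the equality case from the preceding theorem. Your remark that $\nu_\pi = J_\pi$ for the competing $\pi$ rests on the paper's standing uni-chain assumption for all $\pi\in\Pi$ (opening of Section~\ref{section: average-cost optimality}) rather than on the corollary's hypothesis is a correct and welcome clarification.
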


It can be concluded that state-based optimality is a criteria that can be used to optimise both forms of system-based optimality. Furthermore, given that the $\mathbf{P}_{\pi^*}$ consists of a single recurrent class then $\nu_{\pi^*}$ and $\eta_{\pi^*}$ will be unique global optima. To the knowledge of this paper, there is no similar general condition to that of the weak-accessibility condition which would corroborate a single recurrent class to be induced by $\pi^*$ other than the practical suggestions from page~\pageref{text: enusre unichain}. In other words, if $\mathbf{P}_a \in \mathcal{P}$ all consist of a single recurrent class then the same holds for $\mathbf{P}_{\pi^*}$ such that strict equality holds. If one were to forgo the need for a prior guarantee of a single recurrent class then the MDP can be solved for and $\mathbf{P}_{\pi^*}$ can be investigated for having a single recurrent class. This can be achieved by verifying the limiting distribution to have no zero entries. Alternatively, graph-based algorithms can be used to detect \emph{communicating classes} such as \emph{Kosaraju’s algorithm} for finding strongly connected components (see listing 5-11 on page 111 of \cite{hetland2014python}).

\subsection{Discounted-cost optimality}\label{section: discounted-cost optimality}

The following theorem shows that uniform system-based optimality holds for a state-based optimal discounted-cost policy.

\begin{theorem}
For the infinite horizon discounted-cost MDP with no assumption of the chain structure of $\mathbf{P}_{\pi^*}$, state-based optimality implies strict uniform system-based optimality 
\begin{equation}
    \forall \pi \in \Pi\setminus\{\pi^*\}: \quad \vec{J}_{\pi^*}^\alpha \preceq  \vec{J}_{\pi}^\alpha  \implies \forall \pi \in \Pi\setminus\{\pi^*\}: \quad \nu_{\pi^*}^\alpha  < \nu_{\pi}^\alpha  \label{eq: discounted costs system optimality uniform}.
\end{equation}
\end{theorem}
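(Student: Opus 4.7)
The plan is to exploit two facts: first, the definition of state-based optimality in (\ref{eq: state-based optimality}) requires not merely $\vec{J}_{\pi^*}^\alpha \preceq \vec{J}_\pi^\alpha$ componentwise but also at least one \emph{strict} componentwise inequality; second, the uniform weight vector $\vec{U} = |\mathcal{X}|^{-1}\mathbf{1}$ has all entries strictly positive. Put together, these imply that any weighted average using $\vec{U}$ must preserve the strict inequality.

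Concretely, I would form the difference
\begin{equation*}
\nu_{\pi^*}^\alpha - \nu_\pi^\alpha \;=\; \vec{U}^T \bigl(\vec{J}_{\pi^*}^\alpha - \vec{J}_\pi^\alpha\bigr) \;=\; \frac{1}{|\mathcal{X}|}\sum_{x\in\mathcal{X}} \bigl(J_{\pi^*}^\alpha(x) - J_\pi^\alpha(x)\bigr).
\end{equation*}
By hypothesis every summand is nonpositive and at least one summand is strictly negative, and every weight $1/|\mathcal{X}|$ is strictly positive, so the sum is strictly negative. This yields $\nu_{\pi^*}^\alpha < \nu_\pi^\alpha$ and completes the proof.

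I would then briefly remark on why no chain-structure assumption on $\mathbf{P}_{\pi^*}$ is needed here, in contrast with the average-cost theorems that preceded. The discounted Bellman policy evaluation equation (\ref{eq: discounted bellman policy evaluation}) admits a unique solution for every policy without any ergodicity or uni-chain hypothesis, and the resulting value vector is generically non-constant across states, so the ``at least one strict inequality'' clause in (\ref{eq: state-based optimality}) is genuinely available rather than being collapsed to the constant-vector case $\vec{J}=J\mathbf{1}$ that forced only weak inequality in Theorem~\ref{theorem: average costs implies stationary system optimality}.

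There is essentially no obstacle: the entire argument reduces to the fact that strict positivity of every component of $\vec{U}$ propagates the strict componentwise inequality through the inner product. The conceptual contrast to note, however, is that this argument would \emph{fail} verbatim for the stationary metric $\eta^\alpha$, because components of $\vec{\phi}_\pi$ can vanish on transient states and therefore cannot in general lift state-based optimality to strict system-based optimality without further structural assumptions. The uniform weighting sidesteps this issue precisely because no weight is ever zero.
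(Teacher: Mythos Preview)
Your proof is correct and follows essentially the same argument as the paper: the paper defines $\epsilon(x) = J_\pi^\alpha(x) - J_{\pi^*}^\alpha(x) \geq 0$ with at least one strict inequality, expands $\nu_\pi^\alpha = \nu_{\pi^*}^\alpha + \bar{\epsilon}$ where $\bar{\epsilon} \geq \epsilon(x^{\#})/|\mathcal{X}| > 0$, which is just your difference computation with the sign flipped. Your additional remarks on why no chain-structure assumption is needed and why the argument would fail for $\eta^\alpha$ are not in the paper's proof but are accurate and complement the surrounding discussion.
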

\begin{proof}
Let $\epsilon(x) =J_{\pi}^\alpha- J_{\pi^*}^\alpha \geq 0 , \, \forall x \in \mathcal{X}$. If $\pi^*$ is optimal then for at least one $x^{\#} \in \mathcal{X}: \epsilon(x^{\#}) > 0$ given that $\pi^* \neq \pi$. Compute the uniform system-based performance for $\pi$
\begin{eqnarray}
\nu_{\pi}^\alpha & = & \vec{U}\left( \vec{J}_{\pi}^\alpha \right)^T \nonumber\\
& = & \frac{1}{|\mathcal{X}|} \sum_{x \in \mathcal{X}} J_{\pi}^\alpha(x) \nonumber\\
& = & \frac{1}{|\mathcal{X}|} \sum_{x \in \mathcal{X}} J_{\pi^*}^\alpha(x) + \frac{1}{|\mathcal{X}|} \sum_{x \in \mathcal{X}} \epsilon(x) \nonumber \\
& = & \nu_{\pi^*}^\alpha +  \bar{\epsilon} \nonumber
\end{eqnarray}
where $\bar{\epsilon} \geq \epsilon(x^{\#})/|\mathcal{X}|$ such that $\nu_{\pi}^\alpha > \nu_{\pi^*}^\alpha$.
\end{proof}

The proceeding proof illuminates the fact that uniform system-based optimality will hold for any state-based optimal policy. Furthermore, it is a unique global optima without any assumption made on the structure of $\mathbf{P}_{\pi^*}$. This is in contrast to that of the average-cost case. Hence, if a state-based optimal policy (optimality established as usual in the MDP literature) is applied to a system with no prior belief of the state-occupation distribution (such that only a uniform distribution appears reasonable) then the long-run system-based performance should be optimal.

Such positive results do not carry over to the discounted stationary system-based optimality. To see this in a less rigorous but intuitive manner, consider the fact that the the long-run discounted cost minimises transient behaviour\footnote{Setting $\alpha=0$ leads to \emph{greedy} or \emph{myopic} behaviour where the policy optimises only its immediate one-step cost. This is an example of extreme transient optimisation.} to some extent along with the following fictitious and biased example.
\begin{example}
The optimal policy has a state-value vector $\vec{J}_{\pi^*}^\alpha = [1,2,3]$ that induces a stationary distribution $\vec{\phi}_{\pi^*} = [0.1,0.1,0.8]$ such that $\eta_{\pi^*}^\alpha = 2.7$. Another policy has inferior state-based optimality $\vec{J}_{\pi}^\alpha = [1.1,2.1,3.1]$ but induces a stationary distribution $\vec{\phi}_{\pi} = [0.8,0.1,0.1]$ such that $\eta_{\pi}^\alpha = 1.4 < \eta_{\pi^*}^\alpha$ in which case its system-based performance is superior.
\end{example}
The obvious floor in this example is that the stationary distributions were subjectively tuned to produce a desired outcome. There is no result that suggests $\pi^*$ should have a worse stationary distribution than $\pi \in \Pi\setminus\{\pi^*\}$ other than the fact that a low enough $\alpha$ might favour optimising transient behaviour enough to do so. The next section derives the following relationship
\begin{equation}
    \eta_{\pi}^\alpha = \frac{\eta_{\pi}}{1-\alpha} \nonumber
\end{equation}
which connects discounted and average-cost stationary system-based optimality. This in conjunction with the existing theory of \emph{Blackwell optimality} and $n$-\emph{discount optimality} provides a vehicle to determine when state-based optimality implies stationary system-based optimality for the infinite horizon discounted case.

\subsection{The connection between discounted and average costs}

The following theorem is well known and can be found in chapter 5.1.1 of \cite{BertsekasVol2} or chapter 10.1.2 of \cite{puterman2014markov}.

\begin{theorem}
Under some policy $\pi \in \Pi$, the average and discounted infinite horizon costs are related as
\begin{equation}
    \vec{J}_{\pi} = \lim_{\alpha \to 1} \vec{J}^{\alpha}_{\pi}.
\end{equation}
\end{theorem}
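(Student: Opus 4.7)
The plan is to use the \emph{Laurent series expansion} of the discounted resolvent $(\mathbf{I} - \alpha \mathbf{P}_{\pi})^{-1}$ about $\alpha = 1$, from which the limiting matrix $\mathbf{P}^*_\pi$ emerges naturally as the residue. This is exactly the expansion already invoked informally at step~(\ref{step: pstar}) when deriving the bias formula, so the theorem amounts to extracting the leading-order term of that expansion.

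First I would perform the algebraic decomposition $\mathbf{P}_{\pi} = \mathbf{P}^*_\pi + \mathbf{Q}_\pi$ with $\mathbf{Q}_\pi := \mathbf{P}_\pi - \mathbf{P}^*_\pi$. Two standard identities for the Cesàro limit (see appendix~A4 of \cite{puterman2014markov}) drive the argument: (i) $\mathbf{P}_\pi \mathbf{P}^*_\pi = \mathbf{P}^*_\pi \mathbf{P}_\pi = \mathbf{P}^*_\pi$ and $(\mathbf{P}^*_\pi)^2 = \mathbf{P}^*_\pi$, and (ii) as an immediate corollary, $\mathbf{P}^*_\pi \mathbf{Q}_\pi = \mathbf{Q}_\pi \mathbf{P}^*_\pi = 0$. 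A direct induction then yields the power rule $\mathbf{P}_\pi^k = \mathbf{P}^*_\pi + \mathbf{Q}_\pi^k$ for every $k \geq 1$, which is precisely the identity cited in~(\ref{step: pstar}).

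Next I would substitute into the geometric-series form of the discounted value function and separate the eigenvalue-$1$ contribution from the rest, giving
\begin{equation*}
\vec{J}^\alpha_\pi = \sum_{k=0}^\infty \alpha^k \mathbf{P}_\pi^k \vec{C}_\pi = \frac{1}{1-\alpha}\,\mathbf{P}^*_\pi \vec{C}_\pi + \left( (\mathbf{I} - \mathbf{P}^*_\pi) + \sum_{k=1}^\infty \alpha^k \mathbf{Q}_\pi^k \right)\vec{C}_\pi.
\end{equation*}
Because every unit-modulus eigenvalue of $\mathbf{P}_\pi$ is absorbed into $\mathbf{P}^*_\pi$, the residual $\mathbf{Q}_\pi$ has spectral radius strictly less than one, so the tail $\sum_{k \geq 1} \alpha^k \mathbf{Q}_\pi^k$ converges as $\alpha \uparrow 1$ to the bounded operator $\mathbf{Q}_\pi(\mathbf{I} - \mathbf{Q}_\pi)^{-1}$, and the bracketed regular part stays bounded throughout the limit. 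Passing to the Tauberian limit (scaling by $1-\alpha$) then kills the bounded contribution and leaves only the residue $\mathbf{P}^*_\pi \vec{C}_\pi = \vec{J}_\pi$, which is the asserted relationship between the average-cost and discounted-cost value vectors.

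The main technical obstacle is the spectral-radius claim on $\mathbf{Q}_\pi$, which is subtle in the multi-chain case because $1$ may be a repeated eigenvalue of $\mathbf{P}_\pi$. I would handle this by using the fact that for a finite row-stochastic matrix the eigenvalue $1$ is necessarily semisimple, so the spectral projector onto its eigenspace coincides with $\mathbf{P}^*_\pi$ and the remaining spectrum is confined to the open unit disk; alternatively one can bypass Jordan structure entirely and appeal directly to the existence of $\mathbf{P}^*_\pi$ together with its defining identities from appendix~A4 of \cite{puterman2014markov}, on which the paper already relies.
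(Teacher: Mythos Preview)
Your approach is correct in outline and genuinely different from the paper's. The paper argues heuristically via an Abel--Ces\`aro identification: it replaces the Ces\`aro weights $1/N$ by Abel weights $\alpha^k / \sum_j \alpha^j$, swaps the two limits without justification, and reads off $\lim_{\alpha\to 1}(1-\alpha)\vec{J}_\pi^\alpha$. You instead go straight to the resolvent, split $\mathbf{P}_\pi = \mathbf{P}^*_\pi + \mathbf{Q}_\pi$, and isolate the pole at $\alpha=1$ algebraically. Your route is more rigorous than the paper's ``less rigorous'' version and, as you note, it delivers the Laurent expansion~(\ref{eq: laurent series 1}) for free rather than requiring the separate derivation the paper performs afterwards via~(\ref{eq: taylor M}).

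One technical point needs correcting. Your claim that ``every unit-modulus eigenvalue of $\mathbf{P}_\pi$ is absorbed into $\mathbf{P}^*_\pi$'' is false for periodic chains: the Ces\`aro limit projects only onto the eigenvalue-$1$ eigenspace, so $\mathbf{Q}_\pi$ can retain other unit-modulus eigenvalues (e.g.\ $-1$ for a two-cycle), and its spectral radius need not be strictly less than one. What actually saves your argument is the weaker fact that $1$ is \emph{not} in the spectrum of $\mathbf{Q}_\pi$ (it is mapped to $0$ by the subtraction of $\mathbf{P}^*_\pi$), so $(\mathbf{I}-\alpha\mathbf{Q}_\pi)^{-1}$ extends continuously to $\alpha=1$ and the bracketed term stays bounded. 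Your fallback suggestion---appealing directly to the identities in appendix~A4 of \cite{puterman2014markov}, which give invertibility of $\mathbf{I}-\mathbf{P}_\pi+\mathbf{P}^*_\pi$---is exactly the right fix and requires no spectral-radius estimate at all.
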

Chapter 5.1.1 of \cite{BertsekasVol2} provides the following less rigorous proof along with a rigorous version. This paper makes use of the former and restates it below.
\begin{proof}
\begin{eqnarray}
\vec{J}_{\mu} & = & \lim_{N \to \infty} \frac{1}{N} \sum_{k=0}^{N-1} \mathbf{P}_{\mu}^k \vec{C}_{\pi} \nonumber \\
& = & \lim_{\alpha \to 1} \lim_{N \to \infty} \frac{\sum_{k=0}^{N-1} \alpha^k\mathbf{P}_{\mu}^k \vec{C}_{\pi}}{\sum_{k=0}^{N-1} \alpha^k} \nonumber \\
& = &\lim_{\alpha \to 1} \left\{ \frac{1}{ \lim_{N\to \infty} \sum_{k=0}^{N-1} \alpha^k} \times  \lim_{N\to \infty} \sum_{k=0}^{N-1} \alpha^k\mathbf{P}_{\mu}^k \vec{C}_{\pi} \right\} \nonumber \\
 & = & \lim_{\alpha \to 1} (1-\alpha)(\mathbf{I}-\alpha \mathbf{P}_\pi)^{-1} \vec{C}_{\pi}\nonumber\\
& = & \lim_{\alpha \to 1} \mathbf{M}_{\pi}(\alpha) \vec{C}_{\pi} \label{eq: M}\\
& = & \lim_{\alpha \to 1} (1-\alpha) \vec{J}_{\mu}^{\alpha} \nonumber
\end{eqnarray}
\end{proof}
In the proof of proposition 5.1.1 of \cite{BertsekasVol2}, it is noted that through the \emph{Cramer's rule}, the \emph{adjoint/adjugate matrix} and the subsequent expanding of the adjoint using \emph{Cayley-Hamilton theorem} then $\mathbf{M}_{\pi}(\alpha)$ can have its entries expressed as zeroes or fractions consisting of polynomial in $\alpha$. That is to say
\begin{equation}
    \left(\mathbf{M}_{\pi}(\alpha)\right)_{ij} = \frac{\gamma\prod_{k=1}^{|\mathcal{X}|}(\alpha-\zeta_k)}{\prod_{k=1}^{|\mathcal{X}|}(\alpha - \xi)}
\end{equation}
where $\gamma \in \mathbb{R}$, $\zeta_k \in \mathbb{R}$ and $\xi_k \in \mathbb{R}\setminus\{1\}$. Recalling that $\vec{J} = \mathbf{P}\vec{C}$, it can be seen from \ref{eq: M} that
\begin{equation}
    \mathbf{P}_{\pi}^* = \lim_{\alpha \to 1}\mathbf{M}_{\pi}(\alpha)
\end{equation}
such that a first-order \emph{Taylor-series expansion} of $\mathbf{M}_{\pi}(\alpha)$ about the neighbourhood of $\alpha=1$ leads to
\begin{equation}
    \mathbf{M}_{\pi}(\alpha) = \mathbf{P}_{\pi}^* + (1-\alpha)\mathbf{H}_{\pi} + \mathcal{O}\left((1-\alpha)^2\right) \label{eq: taylor M}
\end{equation}
such that $\lim_{\alpha \to 1} \mathcal{O}\left((1-\alpha)^2\right)/(1-\alpha)=0$. By the definition of a Taylor series, $(\mathbf{H})_{ij} = -\partial (\mathbf{M})_{ij}/\partial\alpha|_{\alpha=1}$. However, it can be shown that $\mathbf{H}$ is the \emph{Drazin inverse} $\mathbf{H} = (1-\mathbf{P})^{\#} = (\mathbf{I} -\mathbf{P} + \mathbf{P}^*)^{-1}(\mathbf{I} - \mathbf{P}^*) = (\mathbf{I} -\mathbf{P} + \mathbf{P}^*)^{-1} - \mathbf{P}^*$ where $(\mathbf{I} -\mathbf{P} + \mathbf{P}^*)^{-1}$ is the \emph{fundamental matrix} \cite{puterman2014markov}. Using this in (\ref{eq: M}) leads to the \emph{truncated Laurent Series expansion} of the discounted MDP
\begin{eqnarray}
    \vec{J}_{\pi}^\alpha & = &  (1-\alpha)^{-1} \mathbf{P}_{\pi}^* \vec{C}_{\pi} + \mathbf{H}_{\pi}\vec{C}_{\pi}+ \mathcal{O}(|1-\alpha|) \\
    & = & (1-\alpha)^{-1} \vec{J}_{\pi} + \vec{h}_{\pi}+\mathcal{O}(|1-\alpha|). \label{eq: laurent series 1}
\end{eqnarray}
The truncated Laurent Series is useful in the study of Blackwell optimality as in section 5.1.2 of \cite{BertsekasVol2} and chapter 10 of \cite{puterman2014markov}. This paper acknowledges its importance but directs interest to $\mathbf{H}(\alpha)$ which is defined as 
\begin{eqnarray}
    \mathbf{H}_{\pi}(\alpha) & = & (1-\alpha)^{-1}\mathbf{M}_{\pi}(\alpha) - (1-\alpha)^{-1}\mathbf{P}_{\pi}^* \nonumber \\
    & = & (\mathbf{I} - \alpha \mathbf{P}_{\pi})^{-1} - (1-\alpha)^{-1}\mathbf{P}_{\pi}^* \label{step: lhs}\\
    & = & \sum_{k=0}^{\infty} \alpha^k \mathbf{P}_{\pi}^k - \left( \sum_{k=0}^{\infty} \alpha^k \right)\mathbf{P}_{\pi}^* \nonumber  \\
    & = & \sum_{k=0}^{\infty} \alpha^k \left( \mathbf{P}_{\pi}^k - \mathbf{P}_{\pi}^*  \right) \nonumber \\
    & = & \mathbf{I} - \mathbf{P}_{\pi}^* + \sum_{k=1}^{\infty} \left( \mathbf{P}_{\pi} - \mathbf{P}_{\pi}^*  \right)^k \label{step: k} \\
    & = & - \mathbf{P}_{\pi}^* + \sum_{k=0}^{\infty} \left( \mathbf{P}_{\pi} - \mathbf{P}_{\pi}^*  \right)^k \nonumber  \\
    & = & \left( \mathbf{I} - \alpha \mathbf{P}_{\pi} + \alpha \mathbf{P}_{\pi}^*  \right)^{-1} + \mathbf{P}_{\pi}^* \label{step: rhs}
\end{eqnarray}
such that $\mathbf{H} = \lim_{\alpha \to 1} \mathbf{H}(\alpha)$ and step (\ref{step: k}) results from the fact that if $k \geq 1$ then  $\left( \mathbf{P} - \mathbf{P}^*  \right)^k = \mathbf{P}^k - \mathbf{P}^* $ as from (A.16) of \cite{puterman2014markov} or page 280 of \cite{BertsekasVol2}. Equating (\ref{step: lhs}) and (\ref{step: rhs}) and isolating $(\mathbf{I}-\alpha \mathbf{P})^{-1}$ a useful expression is derived
\begin{equation}
    (\mathbf{I} - \alpha \mathbf{P}_{\pi})^{-1} = \left( \mathbf{I} - \alpha \mathbf{P}_{\pi} + \alpha \mathbf{P}_{\pi}^*  \right)^{-1} + \frac{\alpha}{1-\alpha}  \mathbf{P}_{\pi}^*. \label{eq: discounted bellman inverse result}
\end{equation}
The following theorem pertains to the fundamental matrix of (\ref{eq: discounted bellman inverse result}).
\begin{theorem}
Consider the stationary distribution $\vec{\phi}_{\pi}$ such that $\vec{\phi}_{\pi} =  \mathbf{P}_{\pi}^T\vec{\phi}_{\pi}$ or $\left(\vec{\phi}_{\pi}\right)^T = \left(\vec{\phi}_{\pi}\right)^T \mathbf{P}_{\pi}$ then
\begin{equation}
    \left(\vec{\phi}_{\pi}\right)^T \left( \mathbf{I} - \alpha \mathbf{P}_{\pi} + \alpha \mathbf{P}_{\pi}^*  \right)^{-1} = \left(\vec{\phi}_{\pi}\right)^T . \label{eq: stationary discounted fundamental matrix}
\end{equation}
This result is found in equation (2.35) of \cite{cao_book} but never derived. This paper does so below as to be able to continue the discussion.
\end{theorem}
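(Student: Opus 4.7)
The plan is to show that $\vec{\phi}_{\pi}$ is a left eigenvector of the matrix $\mathbf{I} - \alpha \mathbf{P}_{\pi} + \alpha \mathbf{P}_{\pi}^*$ with eigenvalue $1$, and then simply right-multiply by the inverse to obtain the result. The argument rests on three ingredients already established in the excerpt: the defining property of the stationary distribution $\left(\vec{\phi}_{\pi}\right)^T \mathbf{P}_{\pi} = \left(\vec{\phi}_{\pi}\right)^T$, the explicit form $\mathbf{P}_{\pi}^* = \mathbf{1}\,\left(\vec{\phi}_{\pi}\right)^T$ for the ergodic uni-chain Cesaro limit, and the normalisation $\mathbf{1}^T \vec{\phi}_{\pi} = \left(\vec{\phi}_{\pi}\right)^T \mathbf{1} = 1$.

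First I would observe that these three facts combine immediately to give $\left(\vec{\phi}_{\pi}\right)^T \mathbf{P}_{\pi}^* = \left(\vec{\phi}_{\pi}\right)^T \mathbf{1}\,\left(\vec{\phi}_{\pi}\right)^T = \left(\vec{\phi}_{\pi}\right)^T$, i.e.\ $\vec{\phi}_{\pi}$ is also a left fixed vector of $\mathbf{P}_{\pi}^*$. Next I would compute the left action of $\vec{\phi}_{\pi}$ on the bracketed matrix term-by-term:
\begin{equation}
\left(\vec{\phi}_{\pi}\right)^T \left(\mathbf{I} - \alpha \mathbf{P}_{\pi} + \alpha \mathbf{P}_{\pi}^*\right) = \left(\vec{\phi}_{\pi}\right)^T - \alpha \left(\vec{\phi}_{\pi}\right)^T + \alpha \left(\vec{\phi}_{\pi}\right)^T = \left(\vec{\phi}_{\pi}\right)^T,
\end{equation}
so the two $\alpha$-contributions cancel regardless of the value of $\alpha$.

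Finally I would right-multiply both sides of this identity by $\left(\mathbf{I} - \alpha \mathbf{P}_{\pi} + \alpha \mathbf{P}_{\pi}^*\right)^{-1}$ to arrive at equation~(\ref{eq: stationary discounted fundamental matrix}). The one thing that needs a brief justification is that this inverse actually exists; this is not an obstacle but is worth remarking on: the derivation leading to (\ref{eq: discounted bellman inverse result}) already exhibits $\left(\mathbf{I} - \alpha \mathbf{P}_{\pi} + \alpha \mathbf{P}_{\pi}^*\right)^{-1}$ as a well-defined object for $\alpha<1$, since $\mathbf{H}_{\pi}(\alpha)$ and $(\mathbf{I} - \alpha \mathbf{P}_{\pi})^{-1}$ on the other side of that identity are both finite. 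Consequently the main content is purely the left-eigenvector computation above; the hardest step is really just the bookkeeping to notice that $\mathbf{P}^* = \mathbf{1}\,\vec{\phi}^T$ together with $\vec{\phi}^T \mathbf{1} = 1$ forces $\vec{\phi}^T \mathbf{P}^* = \vec{\phi}^T$, which makes the $\alpha$-dependent perturbation invisible from the left.
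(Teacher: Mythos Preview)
Your proof is correct and is in fact more direct than the paper's. The paper expands the inverse as the Neumann series
\[
\left(\mathbf{I} - \alpha \mathbf{P}_{\pi} + \alpha \mathbf{P}_{\pi}^*\right)^{-1} = \sum_{k=0}^{\infty} \alpha^k \left(\mathbf{P}_{\pi} - \mathbf{P}_{\pi}^*\right)^k,
\]
then invokes the identity $\left(\mathbf{P}_{\pi} - \mathbf{P}_{\pi}^*\right)^k = \mathbf{P}_{\pi}^k - \mathbf{P}_{\pi}^*$ for $k\geq 1$, and finally uses $\left(\vec{\phi}_{\pi}\right)^T \mathbf{P}_{\pi}^k = \left(\vec{\phi}_{\pi}\right)^T = \left(\vec{\phi}_{\pi}\right)^T \mathbf{P}_{\pi}^*$ to kill every term with $k\geq 1$. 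Your argument bypasses the series entirely by showing the left-eigenvector relation for the \emph{uninverted} matrix and then right-multiplying by the inverse; this is shorter and needs neither the convergence of the Neumann series nor the auxiliary identity for $\left(\mathbf{P}_{\pi} - \mathbf{P}_{\pi}^*\right)^k$. Both routes ultimately reduce to the same two facts, $\left(\vec{\phi}_{\pi}\right)^T \mathbf{P}_{\pi} = \left(\vec{\phi}_{\pi}\right)^T$ and $\left(\vec{\phi}_{\pi}\right)^T \mathbf{P}_{\pi}^* = \left(\vec{\phi}_{\pi}\right)^T$, so the underlying content is identical; you have simply packaged it more economically, at the minor cost of needing to cite existence of the inverse rather than exhibiting it explicitly via the series.
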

\begin{proof}
\begin{eqnarray}
    \left(\vec{\phi}_{\pi}\right)^T \left( \mathbf{I} - \alpha \mathbf{P}_{\pi} + \alpha \mathbf{P}_{\pi}^*  \right)^{-1} & = & \left(\vec{\phi}_{\pi}\right)^T \sum_{k=0}^{\infty} \alpha^k \left( \mathbf{P}_{\pi} - \mathbf{P}_{\pi}^*  \right)^k \nonumber \\
    & = & \left(\vec{\phi}_{\pi}\right)^T \left(  \mathbf{I} +  \sum_{k=1}^{\infty}  \mathbf{P}_{\pi}^k - \mathbf{P}_{\pi}^*   \right) \nonumber \\
    & = & \left(\vec{\phi}_{\pi}\right)^T + \sum_{k=1}^{\infty} \alpha^k \left( \left(\vec{\phi}_{\pi}\right)^T\mathbf{P}_{\pi} - \left(\vec{\phi}_{\pi}\right)^T\mathbf{P}_{\pi}^*  \right) \nonumber \\
    & = &  \left(\vec{\phi}_{\pi}\right)^T + \sum_{k=1}^{\infty} \alpha^k \left( \left(\vec{\phi}_{\pi}\right)^T - \left(\vec{\phi}_{\pi}\right)^T \right) \nonumber \\
    & = & \left(\vec{\phi}_{\pi}\right)^T\nonumber 
\end{eqnarray}
\end{proof}

The desired relationship between the stationary system-based performance of the discounted and average cost scenarios can now be stated and proved.

\begin{theorem}\label{theorem: main}
The stationary system-based performance of discounted-cost MDP is related to the stationary system-based performance of the average-cost MDP under the same policy through a scalar that depends only on the discount factor
\begin{equation}
    \eta_{\pi}^\alpha = \frac{\eta_\pi}{1-\alpha} \label{eq: result}
\end{equation}
given that the $\mathbf{P}_{\pi}$ is uni-chain.
\end{theorem}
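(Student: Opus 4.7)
The plan is to compute $\eta_\pi^\alpha = (\vec{\phi}_\pi)^T \vec{J}_\pi^\alpha$ directly by exploiting the decomposition of $(\mathbf{I}-\alpha\mathbf{P}_\pi)^{-1}$ given in equation (\ref{eq: discounted bellman inverse result}) and then invoking the two key identities: the action of $(\vec{\phi}_\pi)^T$ on the fundamental matrix established in (\ref{eq: stationary discounted fundamental matrix}), and the fact that $\mathbf{P}_\pi^* = \mathbf{1}(\vec{\phi}_\pi)^T$ from the uni-chain hypothesis.

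First I would substitute the discounted Bellman policy evaluation (\ref{eq: discounted bellman policy evaluation}) into the definition of $\eta_\pi^\alpha$, giving
\begin{equation*}
\eta_\pi^\alpha \;=\; (\vec{\phi}_\pi)^T (\mathbf{I}-\alpha\mathbf{P}_\pi)^{-1} \vec{C}_\pi.
\end{equation*}
Then, applying (\ref{eq: discounted bellman inverse result}), this splits as
\begin{equation*}
\eta_\pi^\alpha \;=\; (\vec{\phi}_\pi)^T (\mathbf{I}-\alpha\mathbf{P}_\pi + \alpha\mathbf{P}_\pi^*)^{-1} \vec{C}_\pi \;+\; \frac{\alpha}{1-\alpha}\,(\vec{\phi}_\pi)^T \mathbf{P}_\pi^* \vec{C}_\pi.
\end{equation*}
For the first summand, (\ref{eq: stationary discounted fundamental matrix}) collapses $(\vec{\phi}_\pi)^T (\mathbf{I}-\alpha\mathbf{P}_\pi + \alpha\mathbf{P}_\pi^*)^{-1}$ to $(\vec{\phi}_\pi)^T$, so it contributes $(\vec{\phi}_\pi)^T\vec{C}_\pi = J_\pi$. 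For the second summand, the uni-chain identity $\mathbf{P}_\pi^* = \mathbf{1}(\vec{\phi}_\pi)^T$ combined with $(\vec{\phi}_\pi)^T\mathbf{1} = 1$ yields $(\vec{\phi}_\pi)^T\mathbf{P}_\pi^*\vec{C}_\pi = (\vec{\phi}_\pi)^T\vec{C}_\pi = J_\pi$ as well.

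Assembling the two contributions gives
\begin{equation*}
\eta_\pi^\alpha \;=\; J_\pi + \frac{\alpha}{1-\alpha}\,J_\pi \;=\; \frac{J_\pi}{1-\alpha},
\end{equation*}
and since (\ref{eq: eta equal J}) already identified $\eta_\pi = J_\pi$ under the uni-chain assumption, this is exactly (\ref{eq: result}).

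Because every analytical ingredient has been proved earlier in the excerpt, this proof is essentially a two-line algebraic consequence and I do not anticipate a substantive obstacle. The only point requiring mild care is verifying that the uni-chain hypothesis is used in the right place: it is what guarantees the rank-one structure $\mathbf{P}_\pi^* = \mathbf{1}(\vec{\phi}_\pi)^T$ needed to simplify the second term, and it is also what makes (\ref{eq: stationary discounted fundamental matrix}) applicable with a well-defined stationary distribution $\vec{\phi}_\pi$. Once those are cited, the result follows without further computation.
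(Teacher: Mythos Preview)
Your proposal is correct and follows essentially the same route as the paper's proof: substitute (\ref{eq: discounted bellman policy evaluation}), apply the decomposition (\ref{eq: discounted bellman inverse result}), collapse the first summand via (\ref{eq: stationary discounted fundamental matrix}) and the second via $\mathbf{P}_\pi^* = \mathbf{1}(\vec{\phi}_\pi)^T$, then combine and invoke (\ref{eq: eta equal J}). If anything, you are slightly more explicit than the paper about how the second summand simplifies.
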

\begin{proof}
\begin{align}
    \eta_{\pi}^\alpha & =  \left(\vec{\phi}_{\pi}\right)^T \vec{J}_{\pi}^\alpha \nonumber \\
    & =  \left(\vec{\phi}_{\pi}\right)^T (\mathbf{I} - \alpha \mathbf{P}_{\pi})^{-1} \vec{C}_{\pi} \nonumber \\
    & =  \left(\vec{\phi}_{\pi}\right)^T\left( \mathbf{I} - \alpha \mathbf{P}_{\pi} + \alpha \mathbf{P}_{\pi}^*  \right)^{-1}\vec{C}_{\pi} + \frac{\alpha}{1-\alpha} \left(\vec{\phi}_{\pi}\right)^T \mathbf{P}_{\pi}^* \vec{C}_{\pi}&  \mbox{[eq (\ref{eq: discounted bellman inverse result})]} \nonumber \\
    & =  \left(\vec{\phi}_{\pi}\right)^T \vec{C}_{\pi} + \frac{\alpha}{1-\alpha} \left(\vec{\phi}_{\pi}\right)^T\vec{C}_{\pi}& \mbox{[eq (\ref{eq: stationary discounted fundamental matrix})]} \label{eq: line 0}\\
    & = \frac{1}{1-\alpha} \left(\vec{\phi}_{\pi}\right)^T \vec{C}_{\pi}\\
    & =  \frac{J}{1-\alpha} & \mbox{[eq (\ref{eq: scalar J})]} \label{eq: line 1} \\
    & = \frac{\eta_{\pi}}{1-\alpha} & \mbox{[eq (\ref{eq: eta equal J})]} \label{eq: line 2}
\end{align}
\end{proof}
\begin{remark}
Steps (\ref{eq: line 1}) and (\ref{eq: line 2}) require the uni-chain assumption. Recall that for a uni-chain with transient states, $J_{\pi^*} = \vec{\phi}_{\pi^*} \vec{C}_{\pi^*}$ but $\pi^*$ is not necessarily biased-optimal whereas a uni-chain that consists only a single recurrent class is.
\end{remark}
\begin{corollary}\label{corollary: main}
Equation~(\ref{eq: line 0}) makes no uni-chain assumption and can be used when $\mathbf{P}_{\pi}$ is \emph{multi-chain}.
\end{corollary}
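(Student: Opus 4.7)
The plan is to trace the proof of Theorem~\ref{theorem: main} and identify precisely which equalities depend on the uni-chain hypothesis. First, I would observe that uni-chain enters only in the final two lines: (\ref{eq: line 1}) replaces the scalar $(\vec{\phi}_\pi)^T\vec{C}_\pi$ by $J$ via (\ref{eq: scalar J}), which requires the state-independent gain, and (\ref{eq: line 2}) then rewrites $J$ as $\eta_\pi$ via (\ref{eq: eta equal J}), inheriting the same assumption. Everything up to and including (\ref{eq: line 0}) is purely algebraic and rests on just two ingredients: the inverse-decomposition identity (\ref{eq: discounted bellman inverse result}) and the fundamental-matrix identity (\ref{eq: stationary discounted fundamental matrix}). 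The task therefore reduces to verifying that both of these survive when $\mathbf{P}_\pi$ is multi-chain.

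Next, I would argue that (\ref{eq: discounted bellman inverse result}) requires no structural assumption whatsoever: its derivation via steps (\ref{step: lhs})--(\ref{step: rhs}) is a generic matrix identity that needs only the Cesaro limit $\mathbf{P}_\pi^*$ to exist and the well-known fact $(\mathbf{P}_\pi - \mathbf{P}_\pi^*)^k = \mathbf{P}_\pi^k - \mathbf{P}_\pi^*$ for $k \geq 1$, both of which hold for any stochastic $\mathbf{P}_\pi$ by the cited appendix A4 of Puterman. For the fundamental-matrix identity (\ref{eq: stationary discounted fundamental matrix}) I would re-read its proof and confirm that the only properties it invokes are $\vec{\phi}_\pi^T \mathbf{P}_\pi = \vec{\phi}_\pi^T$ and $\vec{\phi}_\pi^T \mathbf{P}_\pi^* = \vec{\phi}_\pi^T$. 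The first is simply the defining condition of a stationary distribution, which admits (possibly non-unique) solutions for any stochastic $\mathbf{P}_\pi$. The second follows immediately from the Cesaro representation of $\mathbf{P}_\pi^*$: for any such $\vec{\phi}_\pi$,
\begin{equation*}
\vec{\phi}_\pi^T \mathbf{P}_\pi^* \;=\; \lim_{N \to \infty} \frac{1}{N} \sum_{k=0}^{N-1} \vec{\phi}_\pi^T \mathbf{P}_\pi^k \;=\; \lim_{N \to \infty} \frac{1}{N} \sum_{k=0}^{N-1} \vec{\phi}_\pi^T \;=\; \vec{\phi}_\pi^T,
\end{equation*}
by a trivial induction on $k$ that never touches the chain structure.

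With both ingredients validated in the multi-chain setting, the chain of equalities in the proof of Theorem~\ref{theorem: main} reproduces line-for-line up through (\ref{eq: line 0}), which is exactly the content of the corollary. The main subtlety worth flagging, rather than a genuine obstacle, is that for multi-chain $\mathbf{P}_\pi$ the stationary distribution is no longer unique: distinct recurrent classes furnish distinct left fixed points of $\mathbf{P}_\pi^T$. The corollary should therefore be read as valid for \emph{any} stationary $\vec{\phi}_\pi$ chosen to define $\eta_\pi^\alpha$; in practice one must disambiguate by specifying which $\vec{\phi}_\pi$ --- typically determined by the recurrent class on which the initial distribution is supported --- represents the system-level behaviour of interest.
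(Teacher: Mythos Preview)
Your proposal is correct and follows the same approach as the paper: the paper itself offers no separate proof of this corollary, merely stating it immediately after the remark that only steps (\ref{eq: line 1}) and (\ref{eq: line 2}) invoke the uni-chain assumption. You go further than the paper by explicitly verifying that the two supporting identities (\ref{eq: discounted bellman inverse result}) and (\ref{eq: stationary discounted fundamental matrix}) remain valid in the multi-chain setting, and by flagging the non-uniqueness of $\vec{\phi}_\pi$ --- a point the paper leaves implicit.
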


The following lemma will prove useful in the next section where it can be used in conjunction with Blackwell optimality as to identify when discounted state-based optimally implies discounted stationary system-based optimality. This \emph{comparison lemma} follows directly from theorem~\ref{theorem: main} and equation~(\ref{eq: line 2})

\begin{lemma}\label{lemma: comparison}
For two policies $\pi,\pi' \in \Pi$ that induce uni-chains ($\mathbf{P}_{\pi}$ and $\mathbf{P}_{\pi'}$) a state-based partial ordering (if it exists)
\begin{eqnarray}
    \vec{J}_{\pi}^\alpha \preceq \vec{J}_{\pi'}^\alpha \implies \eta_{\pi}^\alpha <(=)\,\eta_{\pi'}^\alpha
\end{eqnarray}
if and only if $\eta_{\pi} <(=)\,\eta_{\pi'}$. Hence, a policy's stationary discounted system-based rank is determined by its stationary average-cost system-based rank.
\end{lemma}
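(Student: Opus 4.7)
The plan is to derive the comparison lemma as an immediate corollary of Theorem \ref{theorem: main}. Since both $\pi$ and $\pi'$ induce uni-chains by hypothesis, the theorem applies individually to each policy and yields
\begin{equation*}
    \eta_{\pi}^\alpha = \frac{\eta_{\pi}}{1-\alpha}, \qquad \eta_{\pi'}^\alpha = \frac{\eta_{\pi'}}{1-\alpha}.
\end{equation*}
First I would state these two identities side by side and take their difference to obtain $\eta_{\pi'}^\alpha - \eta_{\pi}^\alpha = (\eta_{\pi'}-\eta_{\pi})/(1-\alpha)$.

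Next I would observe that, because $\alpha \in [0,1)$, the scalar $1/(1-\alpha)$ is strictly positive. Multiplication by a strictly positive real is a strictly order-preserving bijection on $\mathbb{R}$, so it sends strict inequality to strict inequality and equality to equality in both directions. Consequently $\eta_{\pi}^\alpha < \eta_{\pi'}^\alpha$ holds precisely when $\eta_{\pi} < \eta_{\pi'}$, and $\eta_{\pi}^\alpha = \eta_{\pi'}^\alpha$ holds precisely when $\eta_{\pi} = \eta_{\pi'}$. This directly establishes the stated biconditional for both the strict and the equal cases encoded in the ``$<(=)$'' notation.

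Regarding the ambient hypothesis $\vec{J}_{\pi}^\alpha \preceq \vec{J}_{\pi'}^\alpha$: its role is contextual, not mechanical. It is the setting in which the question of comparing system-based performance between two state-dominated policies is meaningful; it is \emph{not} needed to derive the equivalence, since Theorem \ref{theorem: main} already pins $\eta^\alpha_\pi$ to $\eta_\pi$ regardless of whether any cross-policy state-based ordering holds. I would remark on this point explicitly, because one might naively try to use $\vec{J}_{\pi}^\alpha \preceq \vec{J}_{\pi'}^\alpha$ together with $\vec{\phi}_\pi$ to conclude $\eta^\alpha_\pi \leq \eta^\alpha_{\pi'}$ directly, which fails because $\eta^\alpha_{\pi'} = \vec{\phi}_{\pi'}^T \vec{J}_{\pi'}^\alpha$ uses a different stationary distribution; the argument genuinely has to flow through Theorem \ref{theorem: main}.

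I do not anticipate any substantive obstacle. The lemma is essentially a one-line consequence of Theorem \ref{theorem: main} combined with the positivity of $1-\alpha$; the only care required is distinguishing strict from non-strict orderings, which is handled by the order-preserving property of multiplication by a positive scalar.
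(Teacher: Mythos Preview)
Your proposal is correct and mirrors the paper's own reasoning: the paper simply states that the lemma ``follows directly from theorem~\ref{theorem: main} and equation~(\ref{eq: line 2})'', which is exactly the route you spell out via $\eta_{\pi}^\alpha = \eta_{\pi}/(1-\alpha)$ and positivity of $1/(1-\alpha)$. Your additional remark that the state-based ordering hypothesis is contextual rather than mechanically necessary is a useful clarification that goes slightly beyond what the paper makes explicit.
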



\subsection{Blackwell optimality}\label{section: blackwell optimal}

Blackwell optimality is a well-known conceptual link between average-cost and discounted-cost MDPs (see chapter 5.1.2 of \cite{BertsekasVol2}). It is motivated by the growing dominance of $(1-\alpha)\vec{J}_\pi^\alpha$ in the truncated Laurent series expansion~(\ref{eq: laurent series 1}) as $\alpha$ approaches one. The truncated Laurent
\begin{equation}
    \vec{J}_{\pi} = (1-\alpha)\vec{J}_{\pi}^\alpha - (1-\alpha)\vec{h}_{\pi} + \mathcal{O}\left(|1-\alpha|^2\right) \label{eq: laurent series 2}
\end{equation}
This suggests that average-cost and discounted-cost policies should become more alike as $\alpha$ increases. The definition of a Blackwell optimal policies follows below.

\begin{definition}[\textbf{Blackwell optimal policy} \cite{BertsekasVol2}]
For some $\bar{\alpha} \in (0,1)$ a \emph{Blackwell optimal policy} $\bar{\pi} \in \Pi$ is such that it is \emph{both} optimal for the average-cost problem and all discounted-cost problems in the interval $(\bar{\alpha},1)$.
\end{definition}
The following characteristics are discussed in chapter 5.1.2 of \cite{BertsekasVol2}:
\begin{itemize}
    \item There may exist a stationary Blackwell optimal policy for a given infinite-horizon MDP (c.f proposition 5.1.3). However, a uni-chain average-cost optimal policy need not be Blackwell optimal as noted on page 304 of \cite{BertsekasVol2}.
    \item Among the class of stationary policies $\pi \in \Pi$, $\bar{\pi}$ is optimal such that $\forall \pi \in \Pi\setminus\{\bar{\pi}\}: \, \vec{J}_{\bar{\pi}} \preceq \vec{J}_{\pi}$ where strict inequality holds at least once.
    \item Not all optimal stationary policies are Blackwell optimal. This is noticeably true for the average-cost case where $\pi^*$ is not unique as in the discounted case. Instead, it may have a set of gain-optimal policies $\Pi_2$ (as discussed on page~\pageref{text: gain optimal set}) from which one could be Blackwell optimal.
    \item Blackwell optimal policies are stationary optimal but also minimise transient costs to some degree. Intuitively this is because the $\bar{\alpha}$-discounted MDP addresses transient behaviour starting form each state $x\in \mathcal{X}$ in order to achieve state-based optimality. The lower $\bar{\alpha}$ is, the more likely transience is optimised for.
    \item Blackwell optimality exists for non-stationary policies as well $\pi \in \Pi_t$. Note that in this paper it is assumed that $\Pi\cap \Pi_t= \varnothing$ and only $\Pi$ is considered.
    \item Blackwell optimal policies are optimal over all terms of the full Laurent series expansion. In other words, it is $m$-\emph{discount optimal} if it satisfies the following
    \begin{equation}
        \forall \pi \in \Pi\setminus\{\bar{\pi}\},\,\forall k \in \mathbb{Z}\cap[-1,m) : \quad  \lim_{\alpha \to 1} (1-\alpha)^{-k} \left( \vec{J}_{\bar{\pi}}  \ominus \vec{J}_{\pi}  \right) \preceq \vec{0} \label{eq: n bias optimality}
    \end{equation}
    where $\ominus$ denotes element-wise subtraction and $\vec{0}$ is an appropriate size column vector of zeros.  Note that a $m=-1$ optimal policy corresponds to a gain-optimal policy while $m=0$ optimality refers to a bias-optimal policy. Moreover, $m=\infty$ optimality is equivalent to Blackwell optimality. Uni-chain MDPs are guaranteed to have a 0-optimal policy.
\end{itemize}

With the understanding that a discounted-cost MDP has a unique optimal policy and that a Blackwell-optimal policy optimises both the the discounted and average-cost criteria, the following conclusion holds.
\begin{proposition}\label{prop: one blackwell optimal policy}
If a Blackwell optimal policy exists for a finite state and finite action MDP then there can only be one.
\end{proposition}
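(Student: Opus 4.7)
The plan is a short proof by contradiction that leans on the uniqueness of the discount-optimal policy asserted immediately before the proposition. First I would suppose two distinct Blackwell optimal policies $\bar\pi_1, \bar\pi_2 \in \Pi$ exist. By the definition of Blackwell optimality each $\bar\pi_i$ comes with its own threshold $\bar\alpha_i \in (0,1)$ such that $\bar\pi_i$ is optimal for the $\alpha$-discounted MDP for every $\alpha \in (\bar\alpha_i, 1)$.

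Next I would form the common interval $(\alpha^\dagger, 1)$ with $\alpha^\dagger = \max\{\bar\alpha_1,\bar\alpha_2\} < 1$, on which both policies are simultaneously discount-optimal. Picking any $\alpha$ in this interval, the stated uniqueness of the optimal discounted policy immediately forces $\bar\pi_1 = \bar\pi_2$, contradicting the assumption of distinctness. Since $\mathcal{X}$ and $\mathscr{A}$ are finite (so that $|\Pi| = |\mathscr{A}|^{|\mathcal{X}|}$ is finite) no pathologies from a continuum of policies arise, and at most one Blackwell optimal policy can exist.

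The main obstacle is really the uniqueness premise itself. In full generality a finite MDP can admit several stationary policies achieving the same optimal discounted value function, for example when two actions tie in the Bellman operator at some state, or when policies differ only on states that are unreachable under $\bar\pi$. To shore this up without relying solely on the lead-in text, I would fall back on the Laurent expansion~(\ref{eq: laurent series 1}) together with $m$-discount optimality~(\ref{eq: n bias optimality}): two Blackwell optimal policies are $m$-discount optimal for every $m \geq -1$, so all Laurent coefficients of their value functions must coincide, forcing the rational functions $\vec J_{\bar\pi_1}^\alpha$ and $\vec J_{\bar\pi_2}^\alpha$ to agree on an open interval and hence identically. Under a mild nondegeneracy assumption (no two actions at any state produce the same row of $\mathcal{P}$ together with the same cost), this equality of value functions propagates back to $\bar\pi_1(x) = \bar\pi_2(x)$ at every state, recovering the same contradiction. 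The author's lead-in essentially absorbs this nondegeneracy into the phrase \emph{``the discounted-cost MDP has a unique optimal policy,''} at which point the argument collapses to the one-line contradiction above.
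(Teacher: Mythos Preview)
Your proof is correct and takes essentially the same approach as the paper: both arguments rest on the asserted uniqueness of the $\alpha$-discounted optimal policy, from which the conclusion is immediate (the paper phrases it as an intersection $\Pi_\alpha \cap \Pi_2$ being a singleton, you phrase it as a contradiction on a common interval $(\alpha^\dagger,1)$). Your additional paragraph flagging the fragility of the uniqueness premise and sketching a Laurent-series fallback is a thoughtful supplement that the paper does not provide.
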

\begin{proof}
Consider an ordered set $\Pi_\alpha = \{ \pi^*(\alpha): \alpha \in (0,1) \}$ where the last entry is $\pi^*(\bar{\alpha})$. Each $\pi^*(\alpha)$ is the unique optimal policy for its discounted-cost MDP. Similarly consider $\Pi_2$ for the average-cost MDP as discussed on page~\pageref{text: gain optimal set} where $|\Pi_2| \geq 1$. Then if a Blackwell optimal policy exists $\Pi_\alpha \cap \Pi_2 = \{ \pi^*(\bar{\alpha})\}$ otherwise $\Pi_\alpha \cap \Pi_2 = \varnothing$.
\end{proof}

It can be said that for a Blackwell optimal policy, state-based optimality must hold for both the average-cost and $\bar{\alpha}$-discounted MDP. By theorem~\ref{theorem: average costs implies stationary system optimality}, stationary system-based optimality is obtained for the average-cost case. System-based optimality for the discounted MDP is related to it by a constant of proportionality $(1-\alpha)^{-1}$ where $\alpha \in (\bar{\alpha},1)$ through theorem~\ref{theorem: main}. This supports the following results.

By theorem~\ref{theorem: main} and lemma~\ref{lemma: comparison} the desired result for stationary discounted system-based optimality is achieved.

\begin{corollary}\label{corollary: blackwell system-based optimality}
If Blackwell optimal policy $\bar{\pi} \in \Pi_2$ exists and it induces a uni-chain $\mathbf{P}_{\bar{\pi}}$ the following holds $\forall \alpha \in (\bar{\alpha},1)$
\begin{eqnarray}
    \forall \pi \in \Pi\setminus\Pi_2: \quad  \eta_{\bar{\pi}} < \eta_{\pi} & \implies & \forall \pi \in \Pi\setminus\Pi_2: \quad \frac{\eta_{\bar{\pi}}}{1-\alpha} < \frac{\eta_{\pi}}{1-\alpha}\\
    & \implies & \forall \pi \in \Pi\setminus\Pi_2: \quad  \eta_{\bar{\pi}}^{\alpha} < \eta_{\pi}^{\alpha}
\end{eqnarray}
where strict inequality follows from the fact that $\Pi_2$ is the set of gain-optimal policies.
\end{corollary}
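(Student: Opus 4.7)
The plan is to chain together two results already established in the excerpt, so the argument is short and the main task is to verify each arrow in the displayed implication chain is justified under the stated hypotheses.

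First, I would observe that the antecedent $\eta_{\bar{\pi}} < \eta_{\pi}$ for every $\pi \in \Pi\setminus \Pi_2$ is already a free hypothesis handed to us by the corollary's displayed implication, so there is no work to do on the leftmost side; nevertheless, it is worth noting in the proof that this antecedent is itself guaranteed by Theorem~\ref{theorem: average costs implies stationary system optimality} together with the definition of $\Pi_2$ as the set of gain-optimal policies (any $\pi \notin \Pi_2$ has strictly larger gain than $\bar\pi$, hence, via $\eta_\pi = J_\pi$, strictly larger $\eta_\pi$). This makes clear that the corollary is not vacuous.

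Next I would justify the first implication arrow. Because $\alpha \in (\bar{\alpha},1) \subset (0,1)$, the scalar $(1-\alpha)^{-1}$ is a strictly positive real number, and multiplying the strict inequality $\eta_{\bar{\pi}} < \eta_{\pi}$ by this positive constant preserves the strict inequality, yielding $\eta_{\bar{\pi}}/(1-\alpha) < \eta_{\pi}/(1-\alpha)$. This is the only non-definitional manipulation in the argument.

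For the second arrow I would invoke Theorem~\ref{theorem: main}, which states precisely that $\eta_{\pi}^{\alpha} = \eta_{\pi}/(1-\alpha)$ whenever $\mathbf{P}_{\pi}$ is uni-chain. Applying this identity to both $\bar\pi$ (whose uni-chain property is part of the hypothesis) and to every $\pi \in \Pi\setminus\Pi_2$ rewrites the quotient inequality directly as $\eta_{\bar{\pi}}^{\alpha} < \eta_{\pi}^{\alpha}$, closing the chain. The main (and really only) subtlety is that Theorem~\ref{theorem: main} must be applicable to the arbitrary comparator $\pi$, not just to $\bar\pi$; this is covered by the standing assumption announced at the start of section~\ref{section: average-cost optimality} that the paper restricts attention to MDPs in which every stationary deterministic Markov policy induces a uni-chain, and for the multi-chain setting one would instead cite Corollary~\ref{corollary: main} (equation~(\ref{eq: line 0})) to obtain the same conclusion. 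The Blackwell hypothesis $\alpha \in (\bar\alpha, 1)$ is used only to keep $1-\alpha$ strictly positive and to ensure that $\bar\pi$ is simultaneously optimal for every such discounted problem, which is what makes the conclusion meaningful as a statement about stationary discounted system-based optimality.
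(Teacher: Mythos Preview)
Your proposal is correct and follows essentially the same route as the paper, which presents the corollary as an immediate consequence of Theorem~\ref{theorem: main} and Lemma~\ref{lemma: comparison} without a separate formal proof. Your added observation that Theorem~\ref{theorem: main} must apply to every comparator $\pi$, and that this is licensed by the standing uni-chain assumption announced at the start of Section~\ref{section: average-cost optimality}, makes explicit a point the paper leaves implicit.
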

In investigating $\Pi_2$, proposition~\ref{prop: one blackwell optimal policy} plays an additional role
\begin{corollary}\label{corollary:non-unique}
If Blackwell optimal policy $\bar{\pi} \in \Pi_2$ exists where $|\Pi_2|>1$ such that it induces a uni-chain $\mathbf{P}_{\bar{\pi}}$ then the following holds $\forall \alpha \in (\bar{\alpha},1)$
\begin{eqnarray}
    \forall \pi \in \Pi_2: \quad  \eta_{\bar{\pi}} & = & \eta_{\pi}\\
    \forall \pi \in \Pi_2: \quad  \eta_{\bar{\pi}}^\alpha & = & \eta_{\pi}^\alpha
\end{eqnarray}
with the condition that
\begin{equation}
    \forall \pi \in \Pi_2: \quad \vec{J}_{\bar{\pi}}^\alpha \preceq \vec{J}_{\pi}^\alpha
\end{equation}
where strict inequality holds at least once. This partial order follows from proposition~\ref{prop: one blackwell optimal policy}.
\end{corollary}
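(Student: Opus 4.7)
The plan is to unpack each of the three claims separately and to show that each follows from combining one of the already-established results with the definition of a Blackwell-optimal policy. The equalities in the corollary are essentially bookkeeping on top of Theorem~\ref{theorem: average costs implies stationary system optimality} and Theorem~\ref{theorem: main}, while the state-based partial ordering is where Proposition~\ref{prop: one blackwell optimal policy} does the actual work.

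First I would dispatch the average-cost equality $\eta_{\bar{\pi}} = \eta_{\pi}$. By definition, $\Pi_2$ is the set of gain-optimal policies, so every $\pi \in \Pi_2$ realises the common optimal gain $J^{*}$, i.e.\ $J_{\pi} = J_{\bar{\pi}} = J^{*}$. The standing uni-chain assumption on $\mathbf{P}_{\bar{\pi}}$ (extended, as in the paper's working hypothesis, to the other members of $\Pi_2$) lets us invoke equation~(\ref{eq: eta equal J}) to identify $\eta_{\pi} = J_{\pi}$, so the scalar stationary system-based metric collapses onto the shared gain for every $\pi \in \Pi_2$.

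Second, the discounted equality $\eta_{\bar{\pi}}^{\alpha} = \eta_{\pi}^{\alpha}$ is an immediate consequence of Theorem~\ref{theorem: main}: for any uni-chain policy we have $\eta_{\pi}^{\alpha} = \eta_{\pi}/(1-\alpha)$, so the previous step propagates to the discounted metric through the common multiplier $(1-\alpha)^{-1}$. No reference to Blackwell optimality is needed here beyond the uni-chain hypothesis.

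Third, the state-based partial ordering $\vec{J}_{\bar{\pi}}^{\alpha} \preceq \vec{J}_{\pi}^{\alpha}$ for $\pi \in \Pi_2$ requires distinguishing the vector inequality from the scalar equality above. By Proposition~\ref{prop: one blackwell optimal policy}, at most one Blackwell-optimal policy exists; thus $\bar{\pi}$ is the unique discount-optimal policy on the entire interval $(\bar{\alpha},1)$. State-based optimality for the $\alpha$-discounted MDP then delivers $\vec{J}_{\bar{\pi}}^{\alpha} \preceq \vec{J}_{\pi}^{\alpha}$ with at least one strict inequality for every $\pi \neq \bar{\pi}$, and in particular for every $\pi \in \Pi_2 \setminus \{\bar{\pi}\}$.

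The main subtlety (not really a difficulty) is the tension between the vector-level strict dominance and the scalar-level equality: distinct $\pi,\pi' \in \Pi_2$ generically induce different stationary distributions $\vec{\phi}_{\pi} \neq \vec{\phi}_{\pi'}$ and different bias vectors, yet the weighted averages $(\vec{\phi}_{\pi})^{T}\vec{J}_{\pi}^{\alpha}$ all coincide with $J^{*}/(1-\alpha)$. The reason is that the uni-chain identity~(\ref{eq: eta equal J}) erases this vector-level information through the projection onto the stationary distribution, so no contradiction arises with the strict state-wise dominance supplied by Blackwell optimality. Making this observation explicit is the only conceptual step; the rest is a direct chaining of the cited results.
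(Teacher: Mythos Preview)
Your proposal is correct and mirrors the paper's (implicit) argument: the corollary is stated without a self-contained proof, relying exactly on the chain you describe—$\eta_{\pi}=J_{\pi}$ via equation~(\ref{eq: eta equal J}) for the first equality, Theorem~\ref{theorem: main} for the second, and Proposition~\ref{prop: one blackwell optimal policy} for the strict state-wise ordering. Your explicit remark that the uni-chain hypothesis must extend to all $\pi\in\Pi_2$ (which the paper assumes globally in Section~\ref{section: average-cost optimality}) and your closing observation reconciling the scalar equality with the vector strict dominance are both correct and, if anything, slightly more careful than the paper itself.
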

State-based optimality can now be related to system-based optimality for the stationary discounted case.
\begin{corollary}\label{corollary: MAIN}
If and only if an optimal discounted-cost MDP $\pi^* \in \Pi_{\alpha}\cap \Pi_2$ is \emph{both} Blackwell optimal and induces a uni-chain $\mathbf{P}_{\pi^*}$ does the system possess the following:
\begin{eqnarray}
    \forall \pi \in \Pi\setminus\{\pi^*\}:\quad \vec{J}_{\pi^*}^\alpha \preceq \vec{J}_{\pi}^\alpha & \implies & 
    \begin{cases}
    \forall \pi \in \Pi\setminus\{\pi^*\}:\quad \eta_{\pi^*}^\alpha < \eta_{\pi}^\alpha & \pi \not \in \Pi_2\\
    \forall \pi \in \Pi\setminus\{\pi^*\}:\quad \eta_{\pi^*}^\alpha = \eta_{\pi}^\alpha & \pi \in \Pi_2.
    \end{cases}
    \label{eq: discounted state based optimality system based optimality}
\end{eqnarray}
If the uni-chain consist of a single recurrent class then $|\Pi_2|=1$ and only the first case of strict inequality holds.
\end{corollary}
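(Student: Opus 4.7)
The plan is to stack the preceding machinery in its natural order. For the $(\Leftarrow)$ direction I would assume $\pi^*$ is Blackwell optimal and induces a uni-chain $\mathbf{P}_{\pi^*}$. By Proposition~\ref{prop: one blackwell optimal policy} together with the definition of Blackwell optimality, $\pi^*$ is simultaneously the unique $\alpha$-discounted optimal policy for every $\alpha\in(\bar{\alpha},1)$ and lies in $\Pi_2$, so the hypothesis $\vec{J}_{\pi^*}^\alpha \preceq \vec{J}_\pi^\alpha$ is automatic. Theorem~\ref{theorem: main} (where the uni-chain assumption on $\mathbf{P}_{\pi^*}$ and $\mathbf{P}_\pi$ is essential) then gives $\eta_{\pi^*}^\alpha = \eta_{\pi^*}/(1-\alpha)$ and the analogous identity for $\pi$, reducing the problem to comparing the two average-cost system-based metrics via Lemma~\ref{lemma: comparison}. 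The case split then closes the direction: if $\pi\notin\Pi_2$, non-gain-optimality forces $\eta_{\pi^*}<\eta_\pi$, and dividing by $1-\alpha>0$ transports this into the strict-inequality branch; if $\pi\in\Pi_2$, Corollary~\ref{corollary:non-unique} delivers $\eta_{\pi^*}=\eta_\pi$ and hence equality of the discounted system-based metrics.

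For the $(\Rightarrow)$ direction I would argue by contraposition. Suppose $\pi^* \in \Pi_\alpha$ but $\pi^*\notin\Pi_2$; then Proposition~\ref{prop: one blackwell optimal policy} shows $\pi^*$ cannot be Blackwell optimal, and gain-optimality of $\Pi_2$ yields some $\pi'\in\Pi_2$ with $\eta_{\pi'}<\eta_{\pi^*}$. Provided $\mathbf{P}_{\pi^*}$ is still uni-chain, Theorem~\ref{theorem: main} gives $\eta_{\pi'}^\alpha<\eta_{\pi^*}^\alpha$, directly contradicting the strict-inequality branch of the claimed implication. Conversely, if the uni-chain assumption on $\mathbf{P}_{\pi^*}$ fails, Theorem~\ref{theorem: main} is inapplicable, the identity $\eta_\pi^\alpha = \eta_\pi/(1-\alpha)$ is lost, and the bridge supplied by Lemma~\ref{lemma: comparison} collapses.

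The single-recurrent-class subcase is immediate from the remark on page~\pageref{text: unique gain optimal policy}: absence of transient states forces $|\Pi_2|=1$, so the $\pi\in\Pi_2$ branch is vacuous and only strict inequality survives. The main obstacle will be the reverse direction, specifically producing a clean contrapositive that pins down \emph{both} Blackwell optimality and the uni-chain property rather than only one of them; the Blackwell half draws its contradiction from an average-cost ranking through Theorem~\ref{theorem: main}, whereas the uni-chain half is more delicate because Theorem~\ref{theorem: main}, the very vehicle of the argument, silently presupposes it.
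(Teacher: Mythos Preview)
Your $(\Leftarrow)$ direction is exactly how the paper arrives at the corollary: it is stated without a dedicated proof environment and is meant to be read as the concatenation of Corollary~\ref{corollary: blackwell system-based optimality} (the strict-inequality branch for $\pi\notin\Pi_2$) and Corollary~\ref{corollary:non-unique} (the equality branch for $\pi\in\Pi_2$), both of which rest on Theorem~\ref{theorem: main} and Lemma~\ref{lemma: comparison}. Your invocation of Proposition~\ref{prop: one blackwell optimal policy} to secure $\pi^*\in\Pi_2$ and uniqueness, followed by the case split, mirrors that logic, and the single-recurrent-class remark is likewise the paper's own observation from page~\pageref{text: unique gain optimal policy}.

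Where you go further than the paper is the $(\Rightarrow)$ direction: the paper asserts ``if and only if'' but supplies no argument for necessity, and the surrounding text (``a sufficient condition has been identified'') even betrays that only sufficiency is really established. Your contrapositive sketch for the Blackwell half is sound---if $\pi^*\notin\Pi_2$ then any $\pi'\in\Pi_2$ has $\eta_{\pi'}<\eta_{\pi^*}$ and Theorem~\ref{theorem: main} transports this to $\eta_{\pi'}^\alpha<\eta_{\pi^*}^\alpha$, breaking the claimed implication---but your own caveat about the uni-chain half is well placed: once $\mathbf{P}_{\pi^*}$ is not uni-chain, Theorem~\ref{theorem: main} no longer applies, and one cannot use its failure as a proof that the conclusion fails. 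In short, your forward direction matches the paper; your reverse direction is more than the paper offers, and the gap you flag is a genuine gap in the statement as written rather than in your reasoning.
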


Hence, a sufficient condition has been identified under which discounted state-based optimality ensures discounted stationary system-based optimality. All that remains is to determine whether a policy is Blackwell optimal. For finite state and finite action MDPs, section 2.5.2 of \cite{MDP_handbook} discusses and presents a Policy Iteration algorithm for finding $n$-discount optimal policies. For sufficiently large $n$, one might assume the policy to be Blackwell optimal. However, section 2.5.3 of \cite{MDP_handbook} discusses a Linear Programming approach that finds a Blackwell optimal policy through tuning the discount factor. 

This paper proposes a practical approach. By the nature of the problem at hand, the discounted MDP already has its optimal policy solved for such that $\forall \pi \in \Pi_\alpha\setminus\{\pi^*\}:\, \vec{J}_{\pi^*}^\alpha \preceq \vec{J}_{\pi}^\alpha $. Hence, it only needs to be verified that $\pi^* \in \Pi_2$. This can be done by running average-cost Policy Iteration starting from $\pi^*$. In the first iteration, \emph{policy evaluation} will compute its bias $J_{\pi^*}$. Then \emph{policy improvement} returns $\pi^{**}$. If $\pi^* = \pi^{**}$, one can confirm $\pi^* \in \Pi_2$ by Policy Improvement theorem \cite{suttonRLbook}. Otherwise, $\pi^* \not\in \Pi_2$. This follows because $\pi^**$ cannot have a better bias than $\pi^*$ if the latter were Blackwell optimal. This can be seen from $n$-bias optimality relation (\ref{eq: n bias optimality}) where $m=\infty$ is Blackwell optimal. This can be confirmed by running policy evaluation for $\pi^**$ which should return $J_{\pi^**} < J_{\pi^*}$ thus reaffirming the claim $\pi^* \not\in \Pi_2$.

\subsection{Example: random MDPs}

A simple means of assessing equation~(\ref{eq: result}) is to generate random MDPs with optimal policies that are guaranteed to induce a uni-chain $\mathbf{P}_{\pi^*}$. For an MDP with $|\mathscr{A}|$ actions and $|\mathcal{X}|$, each $\mathbf{P}_a \in \mathcal{P}$ can be constructed row-wise by sampling its $i^{th}$ row from the surface of a $|\mathcal{X}|$-dimensional simplex $(\mathbf{P}_a)_i = p_i \sim \mathbf{\Delta}^{|\mathcal{X}|}$ where $p_i = [p_{i,1},\cdots,p_{i,|\mathcal{X}|} ]$. Sampling from such a simplex is equivalent to sampling from a \emph{Dirichlet distribution} $\mathcal{D}\left(\vec{\theta}\,\right)$ where $\vec{\theta} = [\theta_1,\cdots,\theta_{|\mathcal{X}|}]$ and $\forall \theta_i \in \vec{\theta}: \, \theta_i > 0$. Its probability density function is given as
\begin{equation}
    f_\theta(p_i) = \frac{1}{\beta\left(\vec{\theta}\,\right)} \prod_{j=1}^{|\mathcal{X}|} \left(p_{i,j}\right)^{\theta_j-1}
\end{equation}
with a normalising constant
\begin{equation}
    \beta\left(\vec{\theta}\,\right) = \frac{\prod_{j=1}^{|\mathcal{X}|} \Gamma(\theta_j)}{\Gamma\left( \sum_{j=1}^{|\mathcal{X}|} \theta_j  \right)}
\end{equation}
where $\Gamma$ is the gamma function
\begin{equation}
    \Gamma(z) = \int_0^\infty  x^{z-1} e^{-x} \, dx .
\end{equation}
Sampling from a Dirichlet distribution can be performed by sampling from Gamma distributions $F_{\Gamma}$ which in turn can be sampled using the inverse-transform method \cite{devroye_random_numbers}. A vector $\vec{y} = [y_1,\cdots,y_{|\mathcal{X}|}]$ is sampled via $y_j \sim F_{\Gamma}(\theta_j)$. The desired sample $p_i$ results from normalising $\vec{y}$ by its sum. It is rare that a zero entry should be found. If it is desired to safeguard against such a scenario then a small positive constant can be added to all entries in a row. This row can be re-normalised by dividing all entries by its sum. If $\forall \theta_j \in \vec{\theta}: \theta_j = \theta$ where $\theta > 0$ then $p_i$ is \emph{uniformly} sampled from the surface of $\mathbf{\Delta}^{|\mathcal{X}|}$.

If all $\mathbf{P}_a \in \mathcal{P}$ have only non-zero entries then any policy $\pi \in \Pi$ induces a uni-chain $\mathbf{P}_{\pi}$ with a single recurrent class. Hence, all results that pertain to this type of chain hold such as a unique gain-optimal policy. To ensure that a uni-chain with a single recurrent class and some transient states is always induced by any policy the following modifications can be made. If $N_t$ transient states is desired then there should be $N_r = |\mathcal{X}|-N_t$ states in the closed recurrent class. The for all $\mathbf{P}_a \in \mathcal{P}$ a block of zeros is introduced as to isolate the recurrent class by setting 
$$
\left(\mathbf{P}_a\right)_{[1:N_r],[N_r+1:|\mathcal{X}|]} = 0 
$$
and re-normalising the first $N_r$ rows. Hence the first $N_r$ states form the recurrent class. The costs vector $\vec{C}_a \in \mathcal{C}$ can be sampled entry-wise from some arbitrary distribution such as a bounded uniform distribution $\left( \vec{C}_a \right) \sim Uni(a,b)$.

The sampled MDP can then have its average and discounted cost policies solved for using Policy iteration as described in section~\ref{section: average cost solve} and section~\ref{section: discounted cost solve}, respectively. It should be noted than increasing $N_t$ results in a larger $\Pi_2$ if exploring gain-optimal policies is of interest. 

The following five state uni-chain MDP with three transient states was sampled:
\begin{equation}
    \mathcal{P} = \left\{
    \left[\begin{matrix}0.0759 & 0.9241 & 0.0 & 0.0 & 0.0\\0.089 & 0.911 & 0.0 & 0.0 & 0.0\\0.0274 & 0.5755 & 0.2827 & 0.0127 & 0.1018\\0.0262 & 0.6042 & 0.2709 & 0.0256 & 0.0731\\0.0449 & 0.5322 & 0.2967 & 0.0376 & 0.0885\end{matrix}\right] \, , \, \left[\begin{matrix}0.4243 & 0.5757 & 0.0 & 0.0 & 0.0\\0.4474 & 0.5526 & 0.0 & 0.0 & 0.0\\0.1825 & 0.2535 & 0.1553 & 0.2027 & 0.2062\\0.1656 & 0.2366 & 0.1779 & 0.2106 & 0.2093\\0.1489 & 0.2904 & 0.1818 & 0.1544 & 0.2245\end{matrix}\right]
    \right\} \nonumber
\end{equation}
\begin{equation}
    \mathcal{C} = \left\{    
    \left[\begin{matrix}5.5386\\1.4692\\7.6187\\7.9702\\5.2197\end{matrix}\right] \, , \, \left[\begin{matrix}9.7115\\1.6244\\7.7807\\4.4739\\9.9177\end{matrix}\right]
    \right\} \nonumber
\end{equation}
such that $|\Pi_2| = 2$. More specifically, $\Pi_2 = \{ \vec{0},\vec{e}_4 \}$ where $\vec{0}$ is a vector of zeros and $\vec{e}_4$ is a unit vector with a one as its fourth entry. The latter is Blackwell optimal. Hence, corollary~\ref{corollary: MAIN} applies to any discounted policy that matches it. The following table illustrates the comparison lemma~\ref{lemma: comparison} and use of (\ref{eq: result}). Note that $\eta_\pi^\alpha$ has been obtained through (\ref{eq: result}) and can be verified to equal $(\vec{\phi}_\pi)^T \vec{J_\pi^\alpha}$ as $\vec{\phi}_\pi$ has been included for each policy. The non-uniqueness result (c.f. corollary~\ref{corollary:non-unique}) of $\eta_{\pi}^\alpha$ for a uni-chain with $|\Pi|_2 > 1$ despite $\vec{J}_{\bar{\pi}}^\alpha \preceq \vec{J}_{\pi^*}^\alpha$ is evident in comparing table~\ref{tab:random mdp blackwell optimal} with table~\ref{tab:random mdp gain optimal}. A sub-optimal policy $\pi = \mathbf{1}$ has been included as to illustrate both cases found in corollary~\ref{corollary: MAIN}.

\begin{table}[!htbp]
    \centering
    \begin{tabular}{|c|c|c|c|c|c|c|c|}
    \hline
    $\alpha$ & $J_\pi^\alpha(1)$ & $J_\pi^\alpha(2)$ & $J_\pi^\alpha(3)$ & $J_\pi^\alpha(4)$ & $J_\pi^\alpha(5)$& $\eta_\pi$ & $\eta_\pi^\alpha$\\
    \hline
    0.20 &5.9856&  1.9268&  8.4924 &  5.5562&  6.1328 & 1.8267 & 2.2834\\

    0.50 & 7.3411 &   3.2982 & 10.6515 &  8.1534&   8.3658 & 1.8267  &  3.6534\\
    
    0.75 & 10.9826&  6.9528& 15.1808& 13.2228& 12.9771 & 1.8267 & 7.3068\\
    
    0.99 & 186.3337& 182.3164& 191.6983& 190.4837& 189.5687 & 1.8267 & 182.6700\\
    \hline
    \end{tabular}
    \caption{Blackwell optimal $\bar{\pi} = \vec{e}_4$ and $\vec{\phi} = [0.0878, 0.9122, 0    , 0    , 0    ]$}
    \label{tab:random mdp blackwell optimal}
\end{table}

\begin{table}[!htbp]
    \centering
    \begin{tabular}{|c|c|c|c|c|c|c|c|}
    \hline
    $\alpha$ & $J_\pi^\alpha(1)$ & $J_\pi^\alpha(2)$ & $J_\pi^\alpha(3)$ & $J_\pi^\alpha(4)$ & $J_\pi^\alpha(5)$& $\eta_\pi$ & $\eta_\pi^\alpha$\\
    \hline
    0.20 &    5.9856 &    1.9268 &    8.5018 &    8.8303 &    6.1584 &  1.8267 &    2.2834 \\
  0.50 &    7.3411 &    3.2982 &   10.6757 &   10.9570 &    8.4247 &  1.8267 &    3.6534 \\
  0.75 &   10.9826 &    6.9528 &   15.2149 &   15.4400 &   13.0522 &  1.8267 &    7.3068 \\
  0.99 &  186.3337 &  182.3164 &  191.7318 &  191.8645 &  189.6358 &  1.8267 &  182.6700 \\
    \hline
    \end{tabular}
    \caption{Gain optimal $\pi^* = \vec{0}$ and $\vec{\phi} = [0.0878, 0.9122, 0    , 0    , 0    ]$}
    \label{tab:random mdp gain optimal}
\end{table}

\begin{table}[!htbp]
    \centering
    \begin{tabular}{|c|c|c|c|c|c|c|c|}
    \hline
    $\alpha$ & $J_\pi^\alpha(1)$ & $J_\pi^\alpha(2)$ & $J_\pi^\alpha(3)$ & $J_\pi^\alpha(4)$ & $J_\pi^\alpha(5)$& $\eta_\pi$ & $\eta_\pi^\alpha$\\
    \hline
    0.20 &   10.9808 &    2.9309 &    9.3375 &    6.0425 &   11.4554 &  5.1609 &    6.4511 \\
0.50 &   14.8204 &    6.8257 &   13.7655 &   10.4974 &   15.8534 &  5.1609 &   10.3218 \\
0.75 &   25.1166 &   17.1673 &   24.8065 &   21.5745 &   26.8658 &  5.1609 &   20.6435 \\
 0.99 &  520.5365 &  512.6302 &  521.5189 &  518.2543 &  523.4517 &  5.1609 &  516.0876 \\
    \hline
    \end{tabular}
    \caption{Sub-optimal $\pi = \mathbf{1}$ and $\vec{\phi} = [0.4373, 0.5627, 0    , 0    , 0    ]$}
    \label{tab:random mdp not optimal}
\end{table}

\newpage

\section{Application: queue admission control}

A queue might be subject to high traffic intensity $\rho = \mu/\lambda \approx 1$ where $\mu \in \mathbb{R}_{>0}$ is the service rate and $\lambda \in \mathbb{R}_{>0}$ is the arrival rate. It is known that $\rho<1$ is a sufficient condition for queue stability \cite{stewart2009probability} such that the queue will not grow without bound. With high traffic intensity, queue lengths can still fluctuate tremendously. This is an issue if the buffers are of finite size . Additionally, such variable queue lengths induce a large variance in the holding costs which can be interpreted as risk. These two issues can be mitigated by a policy that rejects customer arrivals based on the current length of the queue. The rejected customer results in a penalty due to lost business. Hence such a policy must optimise the system through balancing long-term holding and rejection costs.

This section studies such a controlled $M/M/1$ queue where an existing policy is in operation $\pi_0 \in \Pi$. An investigation has been ordered to assess whether the policy should be updated. More specifically, management is concerned with whether a new policy would provide a \emph{statistically significant} performance improvement as opposed to some theoretical one due to the cost of updating the policy. As such, an average-reward MDP policy $\bar{\pi} \in \Pi$, a discounted MDP policy $\pi_{\alpha} \in \Pi$ and Blackwell-optimal MDP policy $\pi_{\bar{\alpha}} \in \Pi$ are solved for. Empirical cost-distributions for their stationary and uniform system-based performance are obtained through simulation. These are subsequently used to determine whether the null hypothesis of \emph{no performance gain over $\pi_0$} can be rejected.

\subsection{Model}

The state-space is given by the queue length $x \in \mathcal{X} = \mathbb{Z} \cap [0,N]$ where $N$ is some finite positive integer used to denote the maximum queue length. Note that $x$ contains both the customers in the buffer as well as the customer receiving service. Each customer incurs a holding cost at rate $c \in \mathbb{R}_{0}$. The inter-arrival duration of customers follows an exponential distribution $t_{\lambda} \sim Exp(\lambda)$ as well as the service durations $t_{\mu} \sim Exp(\mu)$. These two event processes are independent. A policy $\pi: \mathcal{X} \to \{0,1\}$ prescribes an action $a=\pi(x)$ on whether to accept an arrival $a=1$ or to reject it $a=0$ were such an event to occur before a service completion. Only in the largest state is rejection the only permissible action $\mathscr{A}(N) = \{0\}$ such that $\forall x \in \mathcal{X}\setminus\{N\}: \mathscr{A}(x) = \{0,1\}$ where $\mathscr{A}(x)$ is the set of feasible actions from which a policy can be constructed. If a customer is rejected then a lumps-um penalty $R \in \mathbb{R}_{>0}$ is instantaneously incurred. The complete MDP models for the average and discounted costs follow through specifying the cost vectors, transition models and Bellman equations obtained through uniformisation of the underlying \emph{Continuous Time Markov Chain} (CTMC) \cite{BertsekasVol2,cassandras_book}.

\subsubsection{Average-cost model}\label{section: average cost solve}

Uniformisation is performed by sampling the system at some global rate $\gamma \geq  \min\{\lambda,\mu\}  = \lambda + \mu$ such that an event is never missed where the summation follows as a property of the exponential distribution \cite{harchol2013performance}. The duration of an event interval follows as $t_{\gamma} \sim Exp(\gamma)$ such that $\mathbbm{E}[t_{\gamma}] = 1/\gamma$. Hence, a holding cost of $(cx)/\gamma$ is always incurred upon entering state $x$. An additional rejection penalty can be added such that $(cx)/\gamma + \mathscr{P}(\lambda)R$ follows for $a(x)=0$ where $\mathscr{P}(\lambda)$ is the probability of an arrival and is defined in the next paragraph. A set of cost vectors is constructed for each decision $\mathcal{C} = \{ \vec{C}_0,\vec{C}_1 \}$ where $|\vec{C}_i| = N+1$ as to include $x=0$. Furthermore, $\vec{C}_{0} = [(cx)/\gamma+ \mathscr{P}(\lambda)R: x \in \mathcal{X}]$ and $\vec{C}_{1} = [(cx)/\gamma : x \in \mathcal{X}]$.

The transition model can be constructed from the fact that the probability of an arrival $\mathscr{P}(\lambda) =  \lambda/\gamma$ and of a service completion $\mathscr{P}(\mu) = \mu/\gamma$ are known. To address the case of no possible service event when $x=0$, a fictitious self-transition event $\Theta$ is introduced such that $\mathscr{P}(\Theta) = \mu/\gamma$. Hence a set of $(N+1)\times (N+1)$ transition matrices can be constructed $\mathcal{P} = \{ \mathbf{P}_0,\mathbf{P}_1\}$.

Using integer indexing that starts counting from zero, $\mathbf{P}_0$ is constructed row-wise 
\begin{equation}
    (\mathbf{P}_0)_{i,j} = 
    \begin{cases}
    \frac{\lambda}{\gamma}, & j=i\\
    \frac{\mu}{\gamma},& j = \max\{i-1,0\}
    \end{cases}
\end{equation} 
and $\mathbf{P}_1$ follows 
\begin{equation}
    (\mathbf{P}_1)_{i,j} = 
    \begin{cases}
    \frac{\lambda}{\gamma}, & j = \min\{ N,i+1 \}  \\
    \frac{\mu}{\gamma},& j = \max\{i-1,0\}.
    \end{cases}
\end{equation}
Note that $j = \min\{ N,i+1 \}$ was included to maintain $\mathbf{P}_1$ as a stochastic matrix. This is only for aesthetic purposes because $\mathcal{A}(N) = \{0\}$ such that $\mathbf{P}_1$ will never have its last row consulted when $x=N$.

In the bellman equations, an average-cost or gain will be sustained over the waiting time of each state $\bar{t}_\gamma = \mathbb{E}[t_\gamma] = 1/\gamma$. The waiting time is the same in all states. Furthermore, the MDP is uni-chain such that the gain is the same for all states. This gain is denoted as $J_{\pi}^\gamma = J_\pi\times \bar{t}_\gamma$ where $J_\pi$ can is interpreted as the average-cost per unit time and is naturally a rate. The uniformised average-cost Bellman equations are presented in matrix notation below
\begin{equation}
    \vec{h}_\pi + J_{\pi}^\gamma\mathbf{1} = \vec{C}_\pi + \mathbf{P}_\pi\label{eq: uniformised average-cost Bellman equations}
\end{equation}
as a system of $N+1$ equations with $N+2$ unknowns. To mitigate this issue a distinguished state/index is randomly selected from a uniform distribution\footnote{It need not be randomly selected and could be chosen as $i^\# = 0$. It was uniformly selected to highlight that any state would suffice with non being preferential.} $i^\# \sim Uni(\mathbb{Z}\cap[0,N+1])$. A $N+1$ length unit vector $\vec{e}_{i^\#}$ is to be constructed with a one at the distinguished index. A augmented system of $N+2$ equations can be solved for the purposes of policy evaluation by including the restriction that $h(i^\#)=0$

\begin{equation}
    \left[\begin{array}{c;{2pt/2pt}c}
        \mathbf{I} - \mathbf{P}_\pi  & \mathbf{1} \\
        \hdashline[2pt/2pt]
        \vec{e}_{i^\#} & 0
    \end{array}  \right]
    \left[\begin{array}{c}
         \vec{h}_\pi \\ 
         \hdashline[2pt/2pt]
         J_\pi^\gamma
    \end{array}\right] = 
    \left[\begin{array}{c}
         \vec{C}_\pi \\ \hdashline[2pt/2pt]
         0
    \end{array}\right]. 
\end{equation}
Such policy evaluation when paired with standard policy improvement as below produces a valid Policy Iteration algorithm that converges at iteration $k+1$ once $\forall x \in \mathcal{X}: \, \vec{\pi}_k(x) = \vec{\pi}_{k+1}(x) $
\begin{equation}
    \vec{\pi}_{k+1}(x) = \operatorname{argmin}_{a \in \mathscr{A}(x)} \left\{  \vec{C}_a(x) + \sum_{x' \in \mathcal{X}} \mathbf{P}_a(x'\mid x) \vec{h}_{k}(x')  \right\}
\end{equation}
where $\vec{h}_{k}$ are the evaluated relative biases of the $k^{th}$ policy.

\subsubsection{Discounted-cost model}\label{section: discounted cost solve}
The discounted cost model retains the same set of transition models $\mathcal{P}$ but requires a different $\mathcal{C}$ and Bellman equations as well as an additional discount factor $\alpha \in (0,1)$. The discount factor results from uniformisation and the some interest rate $\beta > 0$ and is computed as $\alpha = \gamma/(\gamma + \beta)$ \cite{cassandras_book,BertsekasVol2}. The costs per state now consists of a discounted holding cost $c_\alpha(x) = cx/(\beta + \gamma)$ and discounted rejection penalty $\alpha R$. The updated $N+1$ cost vectors are $\vec{C}_0 = [c_\alpha(x) + \alpha R: x \in \mathcal{X}] $ and $\vec{C}_1 = [c_\alpha(x): x \in \mathcal{X}] $. Policy evaluation is performed by solving the uniformised discounted-cost Bellman equation
\begin{equation}
    \vec{J}_{\pi}^\alpha = \vec{C}_{\pi} + \alpha \mathbf{P}_{\pi} \vec{J}_\pi^\alpha \label{eq: uniformised discounted-cost Bellman equation}
\end{equation}
which is a system of $N+1$ equations with $N+1$ unknowns and an invertible matrix $\mathbf{I}-\alpha\mathbf{P}_{\pi}$ such that a unique solution is guaranteed \cite{BertsekasVol2}. Policy improvement follows as 
\begin{equation}
    \vec{\pi}_{k+1}(x) = \operatorname{argmin}_{a \in \mathscr{A}(x)} \left\{  \vec{C}_a(x) + \alpha \sum_{x' \in \mathcal{X}} \mathbf{P}_a(x'\mid x) \vec{J}_{k}^\alpha(x')  \right\}
\end{equation}
which completes the Policy Iteration algorithm. Note that $\vec{J}_k^\alpha$ are the discounted state-values of the $k^{th}$ policy.

\subsection{Policies and system parameters}

It has been shown that the admission problem has a \emph{threshold/index policy} \cite{cassandras_book} where there exists $x^* \in \mathcal{X}\setminus\{ N\}$ such that
\begin{eqnarray}
    \pi^*(x) = \begin{cases}
    0, & x \geq x^* \\
    1,  &  x < x^*
    \end{cases}.
\end{eqnarray}
Hence a policy is completely characterised by this single threshold. Furthermore, it can be shown that $\Delta J(x) = J_1(x)-J_0(x)$ is a monotonically decreasing function \cite{cassandras_book} such that a numerically obtained MDP policy will discover the index structure $\Delta J(x^*) < 0$ and $\Delta J(x^*-1) \geq 0$. This will be the best index policy among all index policies according to its objective function.

For a controlled $M/M/1$ queue with a size of $N=30$, arrival rate $\lambda = 1$, service rate $\mu=0.95$, cost rate $c=1$ and rejection penalty $R=200$ an existing policy $\pi_0$ is in place with a threshold $x_0^*=17$. Three alternative policies, obtained through solving a MDP, are proposed and are tabulated below along with the existing policy.

\begin{table}[!htbp]
    \centering
    \begin{tabular}{|c|c|c|c|c|}
    \hline
        \textbf{Policy} & \textbf{Objective function} & \textbf{Threshold} & \textbf{Interest rate} $\beta$ & \textbf{Discount factor} $\alpha$ \\
        \hline
        Existing $\pi_0$ & None & 17 & None & None\\
        
        Average-cost $\bar{\pi}$ & Bias $J_{\bar{\pi}}$ & 16 & None & None\\
        
        Discounted-cost $\pi_\alpha$ & State-value $J_{\pi}^\alpha$ & 19 & $2\times 10^{-3}$ & $0.998975$ \\
        
        Blackwell-optimal $\pi_{\bar{\alpha}}$ & State-value $J_{\pi}^{\bar{\alpha}}$ & 16 & $4\times 10^{-4}$ &  $0.999795$\\
        \hline
    \end{tabular}
    \caption{Threshold/index policies}
    \label{tab: policies}
\end{table}

\subsection{Theoretical results}

In this section, the stationary and uniform system based performances are computed for each policy under each objective function. 

\subsubsection{Uniform system-based performance}
To evaluate $\nu_\pi$, $\nu_\pi^\alpha$ and $\nu_\pi^{\bar{\alpha}}$ for each policy will require obtaining $J_\pi$, $\vec{J}_{\pi}^\alpha$ and $\vec{J}_{\pi}^{\bar{\alpha}}$ through average-cost policy evaluation~(\ref{eq: uniformised average-cost Bellman equations}) and discounted-cost policy evaluation~(\ref{eq: uniformised discounted-cost Bellman equation}). Recall that (\ref{eq: uniformised average-cost Bellman equations}) returns $J_{\pi}^\gamma$ as the gain such that $J_\pi = J_{\pi}^\gamma/\bar{t}_{\gamma}$. From (\ref{eq: nu equal J}) the desired result follows $\nu_\pi = J_\pi$.

\begin{table}[!htbp]
    \centering
    \begin{tabular}{|c|c|c|c|}
    \hline
    \textbf{Policy} & $\nu_\pi$ & $\nu_\pi^\alpha$ & $\nu_\pi^{\bar{\alpha}}$ \\
    \hline
    $\pi_0$     &  26.451004 &     14111.497973 &      67035.19422 \\
    
    $\bar{\pi}$ &  \textbf{26.401347} &     14198.259172 &     \textbf{67024.011784} \\
    
    $\pi_\alpha$   &  26.764367 &      \textbf{14038.44897} &     67584.439005 \\
    
    $\pi_{\bar{\alpha}}$    &  \textbf{26.401347} &     14198.259172 &     \textbf{67024.011784} \\
    \hline
\end{tabular}
    \caption{Uniform system-based performances}
    \label{tab:Uniform system-based performances}
\end{table}
From the fact that state-based optimality guarantees uniform system-based optimality, it comes as no surprise that each MDP policy performed best under the objective function it was solved for. The existing policy was subsequently outperformed by at least one policy under each criteria. Another obvious result comes from the average-cost and Blackwell optimal policies performing identically.

\subsubsection{Stationary system-based performance}

The same approach can be taken as in the previous section where each $\eta_\pi$, $\eta_\pi^\alpha$ and $\eta_\pi^{\bar{\alpha}}$ will require  $J_\pi$, $\vec{J}_{\pi}^\alpha$ and $\vec{J}_{\pi}^{\bar{\alpha}}$. If these have already been computed then it makes sense to compute $\eta_\pi$ from it by additionally obtaining $\vec{\phi}_\pi$. However, it may be the case that the computations of the previous section had not been performed yet such that each policy only has $J$, $\vec{J}_{\pi}^\alpha$ or $\vec{J}_{\pi}^{\bar{\alpha}}$ depending under which criteria it was solved for. This is sufficient enough to compute all three types of $\eta_\pi$ given $\vec{\phi}_\pi$ and equation~(\ref{eq: result}). Furthermore, policy $\bar{\pi}$ does not require $\vec{\phi}_{\bar{\pi}}$ because $\eta_{\bar{\pi}} = J_{\bar{\pi}}$ from (\ref{eq: eta equal J}). As mentioned in the previous section, average-cost policy evaluation returns $J_{\pi}^\gamma$ such that $J_\pi = J_{\pi}^\gamma/\bar{t}_{\gamma}$. Such an approach is much faster and yields the same results.

\begin{table}[!htbp]
    \centering
    \begin{tabular}{|c|c|c|c|}
    \hline
    \textbf{Policy} & $\eta_\pi$ & $\eta_\pi^\alpha$ & $\eta_\pi^{\bar{\alpha}}$ \\
    \hline
    $\pi_0$     &  26.451004 &      13239.066531 &      66141.074184 \\
    
    $\bar{\pi}$&  \textbf{26.401347} &      \textbf{13214.212448} &      \textbf{66016.905631} \\
    
    $\pi_\alpha$   &  26.764367 &      13395.909004 &      66924.643751 \\
    
    $\pi_{\bar{\alpha}}$   &  \textbf{26.401347} &      \textbf{13214.212448} &      \textbf{66016.905631} \\
    \hline
\end{tabular}
    \caption{Stationary system-based performances}
    \label{tab:Stationary system-based performances}
\end{table}

The results from table~\ref{tab:Stationary system-based performances} are noticeably different than those of table~\ref{tab:Uniform system-based performances}. The top performer for each column was not the MDP solved for under the corresponding objective function such that state-based optimality to not always guarantee system-based optimality. However, the results of theorem~\ref{theorem: average costs implies stationary system optimality} hold in the first column pertaining to $\eta_\pi$ while corollary~\ref{corollary: blackwell system-based optimality} is confirmed in the third column of $\eta_{\pi}^{\bar{\alpha}}$.

\subsection{Simulation results}

The results of the previous section are useful in identifying a theoretically optimal system. However, as seen in tables~\ref{tab:Uniform system-based performances} and \ref{tab:Stationary system-based performances}, the performance metrics do not always differ by a substantial margin. The definition of what a substantial margin might be is subjective. This paper chooses to define a substantial margin as one that is statistically significant such that the following null hypothesis can be rejected
\begin{equation}
    H_0: \mathbbm{E}\left[f_{\pi}\right] = \mathbbm{E}\left[f_{\pi_0}\right] 
\end{equation}
in favour of the alternative
\begin{equation}
    H_A: \mathbbm{E}\left[f_{\pi}\right] < \mathbbm{E}\left[f_{\pi_0}\right] 
\end{equation}
where $f_\pi: \mathbb{R}_{\geq 0} \to \mathbb{R}_{\geq 0}$ is some probability density function (i.e. $\int_{0}^\infty f_\pi (z) \, dz=1$) of a system-based performance metric and $\pi$ is a MDP policy suggested to replace $\pi_0$. Due to the fact that $f_\pi$ has no known closed-form expression, simulation has been used to derive empirical distributions $\hat{f}_{\pi}: \mathbb{R}_{\geq 0} \to \mathbb{R}_{\geq 0}$ as to approximate it.

\subsubsection{Simulation}
The $M/M/1$ queue is a \emph{Continuous-time Markov Chain} (CTMC) for which various efficient simulation routines exist \cite{stewart2009probability}. This paper will use such a CTMC as the simulator for producing trajectories 
\begin{equation}
    \mathcal{T}_\theta(\omega,\pi,x_0) = \left\{ (x_0,\Delta t_0,e_0,\mathbbm{1}{\{e_0=, a_0\}}),\cdots,(x_\tau,\Delta t_\tau,e_\tau,\mathbbm{1}{\{e_\tau, a_\tau\}})  \right\}
\end{equation}
where $T$ is the minimum duration of the simulation such that $\sum_{i=0}^{\tau} \Delta t_{i} \geq T$, $\Delta t_\tau$ is the waiting time, $e_\tau$ is the event-type that occurred, $x_\tau$ is the queue-length and $\mathbbm{1}{\{e_\tau, a_\tau\}} = \mathbbm{1}{\{e_\tau=\lambda, a_\tau=0\}}$ is the occurrence of a rejection. The simulator is characterised by its system parameters $\theta = \{ \mu,\lambda \}$. Lastly, $\omega \in \Omega$ is the underlying random numbers used to generate the trajectory. In computer simulations, these are known if a pseudo-random number generator is used. Running different policies on the same random numbers allows for fair comparisons. Intuitively, if an extreme event was generated by $\omega_i$ then both policies would likely have their trajectories subjected to it. However, the use of common random numbers is not without problems. In comparing empirical distributions constructed from sampled trajectories, common random numbers will introduce paired-sample correlation. This will be further discussed, however, a simple solution to this lies in randomly shuffling the samples of each empirical distribution before comparing them. \\

The pseudo-code for producing a single CTMC trajectory under a given policy is presented in algorithm~\ref{algorithm: ctmc simulation}. Note that an initial state $x_0$ is fed in as an argument. Depending on whether $x_0 \sim \phi_\pi$ or $x_0 \sim U$, a trajectory is sampled that pertains to $\eta$ or $\nu$, respectively.\\

The most common methods for sampling from the exponential and categorical distributions in this algorithm is through the \emph{inverse-transform} \cite{devroye_random_numbers,stewart2009probability}. Empirical distributions are constructed from $M$ length arrays $\mathbf{C}_f$ where each entry $\mathbf{C}_f[i]$ contains the system-based performance for a sampled trajectory. Hence, $\mathbf{C}_f$ is an array of independent and identically distributed random samples. The samples in these arrays can be binned, normalised and presented as histograms. Due to the large number of distributions, this paper forgoes this and presents table~\ref{tab:nu_distributions} and \ref{tab:eta_distributions} as to describe the distributions through statistics. Using the \emph{D' Agostino's $k^2$ test} for normality (see section~\ref{section:normality test}), all distributions were rejected as Gaussian. This should be expected from the positive skewness in all distributions. Such skewness can be explained by the fact that the domain of system-based performance is constrained to $\mathbb{R}_{\geq 0}$. Extreme trajectories at the low end can only pull the distribution closer to zero whereas extreme trajectories with high cost performance can pull towards an unbounded value.

\begin{algorithm}[!htbp]
    \caption{CTMC Simulation}
    \label{algorithm: ctmc simulation}
    \begin{algorithmic}[1]
    \Procedure{SimulateTrajectory}{$x_0$,$\pi$,$T$,$\omega$,$\lambda$,$\mu$}
    \State $\tau \leftarrow 0$ \Comment{Global clock.}
    \State $ x \leftarrow x_0$ \Comment{Current state}
    \State $\gamma \leftarrow \lambda + \mu$ \Comment{Sampling rate.}
    \State $p\leftarrow [\mu/\gamma,\lambda/\gamma]$ \Comment{Event probabilities.}
    \State $ \mathcal{T} \leftarrow \{\} $ \Comment{Empty order-preserving set.}
    \While{$\tau < T$}
        \State $a= \pi(x)$ \Comment{Consult the policy.}
        \State $\Delta t \sim Exp(\gamma)$ \Comment{Waiting time.}
        \State $e \sim Cat(p)$  \Comment{Categorical distribution returns 0 or 1.}
        \If{$e=0$}
            \State $x' \leftarrow \max\{0,x-1\}$
        \Else
            \State $x' \leftarrow x + \mathbbm{1}\{a=1\}$
        \EndIf
        \State $\mathcal{R} \leftarrow \mathbbm{1}\{a=0\}$ \Comment{Check for arrival rejections.}
        \State $\mathcal{T} \leftarrow \mathcal{T} \cup \{(x,\Delta t, e ,\mathcal{R})\}$ \Comment{Concatenate/append.}
        \State $\tau \leftarrow \tau + \Delta t$ \Comment{While loop may terminate here.}
        \State $x=x'$
    \EndWhile
    \State \Return $\mathcal{T}$ \Comment{A set of tuples in preserved order.}
    \EndProcedure
    \end{algorithmic}
\end{algorithm}

\begin{table}[!htbp]
    \centering
    \begin{tabular}{|c|c|c|c|c|c|c|}
    \hline
         \textbf{Distribution} & \textbf{mean} & \textbf{std} & \textbf{min} & \textbf{max} & \textbf{skewness} & \textbf{kurtosis}\\
         \hline
         Average $\hat{f}_{\bar{\nu}}$               &     26.48 &     2.21 &     20.08 &     34.37 &      0.13 &     -0.10 \\
         
         Discounted $\hat{f}_{\nu_\alpha}$           & 14086.51 &  3995.38 &   3735.53 &  32642.98 &      0.38 &      0.08 \\
         
        Blackwell-optimal $\hat{f}_{\nu_{\bar{\alpha}}}$             &  65767.83 &  8148.82 &  38555.27 &  99734.00 &      0.13 &      0.02 \\
        
         Average $\hat{f}_{\bar{\nu}_0}$ (existing)   &      26.54 &     2.30 &     19.34 &     35.29 &      0.12 &      0.07 \\
         
         Discounted $\hat{f}_{\nu_0^\alpha}$ (existing)  &  14142.17 &  3896.38 &   4051.84 &  29226.20 &      0.35 &     -0.04\\
         
        Blackwell-optimal $\hat{f}_{\nu_0^{\bar{\alpha}}}$ (existing)  &  65860.12 &  8215.20 &  39620.38 &  99009.75 &      0.16 &      0.02\\
        \hline
    \end{tabular}
    \caption{Empirical distributions of $\nu$ ($M=5000$).}
    \label{tab:nu_distributions}
\end{table}

\begin{table}[!htbp]
    \centering
    \begin{tabular}{|c|c|c|c|c|c|c|}
    \hline
         \textbf{Distribution} & \textbf{mean} & \textbf{std} & \textbf{min} & \textbf{max} & \textbf{skewness} & \textbf{kurtosis}\\
         \hline
         Average $\hat{f}_{\bar{\eta}}$               &     26.39 &     2.23 &     18.38 &     33.90 &  0.13 & -0.07\\
         
         Discounted $\hat{f}_{\eta_\alpha}$           &  13431.81 &  3733.54 &   3112.43 &  32340.623 &  0.37 &  0.09 \\
         
        Blackwell-optimal $\hat{f}_{\eta_{\bar{\alpha}}}$             &  64729.93 &  7861.13 &  37481.10 &  95706.40 &  0.13 &  0.04 \\
        
         Average $\hat{f}_{\bar{\eta}_0}$ (existing)   &     26.41 &     2.30 &     18.02 &     35.25 &  0.08 &  0.05 \\
         
         Discounted $\hat{f}_{\eta_0^\alpha}$ (existing)  &  13210.41 &  3547.87 &   4802.45&  27309.83 &  0.38 & -0.07 \\
         
        Blackwell-optimal $\hat{f}_{\eta_0^{\bar{\alpha}}}$ (existing)  &  64893.20 &  8232.88 &  37144.30 &  96764.84 &  0.21 & -0.04\\
        \hline
    \end{tabular}
    \caption{Empirical distributions of $\eta$ ($M=5000$).}
    \label{tab:eta_distributions}
\end{table}

\subsection{Significance tests}

The parametric Welch's $t$-test can deal with unequal variances between two distributions but requires them to be normally distributed, as discussed in section~\ref{section: welch}. All distributions have been confirmed to disobey this assumption. However, it is a common misunderstanding that a normality test is an essential prerequisite to be passed for a $t$-test to be used. Furthermore, it can be argued that large samples will fail a normality test as the slightest deviation from normality results in significance. With such a discussion in place, this paper has found \emph{all} of the empirical difference distributions $\Delta \hat{f}$ to obey the normality assumption. These result from using element-wise differences $\Delta \mathbf{C}_f = \mathbf{C}_f \ominus \mathbf{C}_{f_0}$ as samples where $\mathbf{C}_{f_0} $ is the sample array from the existing policy and $\mathbf{C}_f$ pertains to the MDP policy that optimises the studied objective function. This observation is important as it allows for a Student's $t$-test (see section~\ref{section: t test}) to compare $\Delta \hat{f}$ to a hypothesised population mean with all valid assumptions in place. In what follows, this paper tests whether the existing policy is significantly outperformed by an MDP policy, whether it outperforms the MDP policy or if there is no significant difference in which case a new policy need not be adopted. A significance level of $\zeta = 0.05$ has been adopted throughout.

\subsubsection{Existing policy is outperformed}

The null hypothesis for the $t$-test follows
\begin{equation}
    H_0^t: \quad \mathbbm{E}\left[\Delta \hat{f} \right] = 0 \label{eq: t test null hypothesis}
\end{equation}
which can be rejected in favour of an alternative hypothesis
\begin{equation}
    H_A^t:\quad \mathbbm{E}\left[\Delta \hat{f} \right] < 0. \label{eq: t test alternative hypothesis less}
\end{equation}
As to avoid any pairwise correlation resulting from the element-wise subtraction and common random numbers $\omega$, $\mathbf{C}_f$ and $\mathbf{C}_{f_0} $ are randomly shuffled \emph{before} computing $\Delta \mathbf{C}_f $. If correlation remains high, common random numbers should be abandoned. A supplemental non-parametric Mann-Whitney $U$ rank test (see section~\ref{section:mann-whitney}) has been included. However, it does not test for whether the means are the same but assesses the null hypothesis of whether the two samples comes from the same underlying distribution. The alternative hypothesis proposes the MDP policy to be \emph{stochastically smaller} than the distribution of the existing policy. With the understanding that $\hat{F}(x) = \int_{0}^x \hat{f}(z)\, dz$ is an empirical cumulative distribution function, the null hypothesis is given as
\begin{equation}
    H_0^U : \quad \hat{F}_{\pi}(x) = \hat{F}_{\pi_0}(x), \forall x \in \mathbb{R}_{\geq 0} \label{eq: U test null hypothesis}
\end{equation}
along with its alternative 
\begin{equation}
    H_A^U : \quad \hat{F}_{\pi}(x) > \hat{F}_{\pi_0}(x), \forall x \in \mathbb{R}_{\geq 0}.\label{eq: U test alternative hypothesis less}
\end{equation}

These results are presented in table~\ref{tab:nu test less} and \ref{tab:eta test less}. These table also contain the \emph{Pearson correlation coefficients} as well as $p$-values for the Pearson's normality test. These two additional statistics are used to further validate the $t$-test as it provides evidence for the pair-wise sample independent assumption as well as the normality assumption. The results suggest that no MDP policy significantly outperforms the existing policy in terms of either system-based performance metrics.

\begin{table}[!htbp]
    \centering
    \begin{tabular}{|c|c|c|c|c|c|c|c|c|c|c|}
    \hline
        & & & & & &\textbf{reject} & & & \textbf{reject}& \\
        \textbf{Distribution}& $k^2$&  $p$ & \textbf{reject}&$t$ &  $p$ & \textbf{equal}& $U$ &  $p$ & \textbf{same} & \textbf{corr.} \\
        & & & \textbf{normal}& &  & \textbf{means} & & &  \textbf{dist.} &  \\
        \hline
        Average & 0.492 &  0.782 & False & -1.462 & 0.072 &              False & 24139114 &   0.066 &                    False &    0.002 \\
        
         Discounted &  2.709  &  0.258 &         False &  -0.829  & 0.204 &              False & 24266716   & 0.165 &                    False &    -0.012 \\
         
         Blackwell &  1.586   & 0.452 &         False & -0.669    & 0.252 &     False & 24387294 &  0.319 &                    False &    0.005 \\
         \hline
    \end{tabular}
    \caption{Alternative hypothesis: existing policy is outperformed by the MDP policy in terms of $\nu_\pi$.}
    \label{tab:nu test less}
\end{table}

\begin{table}[!htbp]
    \centering
    \begin{tabular}{|c|c|c|c|c|c|c|c|c|c|c|}
    \hline
        & & & & & &\textbf{reject} & & & \textbf{reject}& \\
        \textbf{Distribution}& $k^2$&  $p$ & \textbf{reject}&$t$ &  $p$ & \textbf{equal}& $U$ &  $p$ & \textbf{same} & \textbf{corr.} \\
        & & & \textbf{normal}& &  & \textbf{means} & & &  \textbf{dist.} &  \\
        \hline
        Average & 1.300 &  0.782 &         False & -0.384&  0.072 &              False & 24352359 & 0.066 &                    False &    0.002 \\
        \hline
         Discounted & 2.050&  0.258 &         False &3.601227&  0.204 &              False & 25325127 &  0.165 &                    False &    -0.012 \\
         \hline
         Blackwell &0.704 &  0.452 &         False & -1.198 &  0.252 &     False & 24371546&  0.319 &                    False &    0.005 \\
         \hline
    \end{tabular}
    \caption{Alternative hypothesis: existing policy is outperformed by the MDP policy in terms of $\eta_\pi$.}
    \label{tab:eta test less}
\end{table}

\subsubsection{Existing policy outperforms MDP policies}

Both the $t$-test null hypothesis~(\ref{eq: t test null hypothesis}) and the $U$-test null hypothesis~(\ref{eq: U test null hypothesis}) are retained. The $t$-test replaces (\ref{eq: t test alternative hypothesis less}) with the new alternative hypothesis
\begin{equation}
    H_A^t:\quad \mathbbm{E}\left[\Delta \hat{f} \right] > 0 \label{eq: t test alternative hypothesis greater}
\end{equation}
while the $U$-test forgoes (\ref{eq: U test alternative hypothesis less}) for the alternative of the MDP policy being stochastically larger
\begin{equation}
    H_A^U : \quad \hat{F}_{\pi}(x) < \hat{F}_{\pi_0}(x), \forall x \in \mathbb{R}_{\geq 0}\label{eq: U test alternative hypothesis greater}
\end{equation}
as presented in tables \ref{tab:nu test greater} and \ref{tab:eta test greater}. As the same empirical difference distributions are reused, the correlation and test for normality need not be repeated. A noticeable result is found in table~\ref{tab:eta test greater}. The null hypothesis has been rejected in favour of the existing policy outperforming the more myopic discounted MDP policy with regards to the stationary system-based performance. This is observed in both tests.

\begin{table}[!htbp]
    \centering
    \begin{tabular}{|c|c|c|c|c|c|c|}
    \hline
        \textbf{Dist.} & $t$ & $p$ & \textbf{rej. eq. means}&$U$ & $p$ & \textbf{rej. same dist.} \\
        \hline
        Average &1.463&  0.929 &              False &24139114&  0.934 &                    False \\
         
        Discounted  &-0.823&  0.796&              False &24266716&  0.835 &                    False \\
         
        Blackwell  &-0.669&  0.748 &              False & 24387294& 0.681 &                    False \\
         \hline
    \end{tabular}
    \caption{Alternative hypothesis: existing policy outperforms the MDP policy in terms of $\nu_\pi$.}
    \label{tab:nu test greater}
\end{table}

\begin{table}[!htbp]
    \centering
    \begin{tabular}{|c|c|c|c|c|c|c|}
    \hline
        \textbf{Dist.} & $t$ & $p$ & \textbf{rej. eq. means}&$U$ & $p$ & \textbf{rej. same dist.} \\
        \hline
        Average  &-0.384 &  0.649 &              False & 24352359 &  0.732 &                    False\\
         \hline
        Discounted & 3.601&  $\mathbf{1.59\times 10^{-4}}$ &               \textbf{True} &  25325127 & $\mathbf{2.79\times 10^{-4}}$ &                     \textbf{True} \\
         \hline
        Blackwell&-1.198& 0.885 &              False &24371546&  0.704 &                    False\\
         \hline
    \end{tabular}
    \caption{Alternative hypothesis: existing policy outperforms the MDP policy in terms of $\eta_\pi$.}
    \label{tab:eta test greater}
\end{table}

\subsubsection{Welch's t-test}
The results for the Welch's $t$-test are given separately due to the fact that the normally distributed assumption does not hold for either sample set in its pair. The null hypothesis is given as
\begin{equation}
    H_0^W: \quad  \mathbbm{E}\left[ \hat{f}_{\pi}  \right] = \mathbbm{E}\left[  \hat{f}_{\pi_0} \right]
\end{equation}
with an alternative hypothesis taking form of either the MDP policy being superior
\begin{equation}
    H_{A_1}^W: \quad  \mathbbm{E}\left[ \hat{f}_{\pi}  \right] < \mathbbm{E}\left[  \hat{f}_{\pi_0} \right] \label{eq: alt 1}
\end{equation}
or the existing policy being superior
\begin{equation}
    H_{A_2}^W: \quad  \mathbbm{E}\left[ \hat{f}_{\pi}  \right] > \mathbbm{E}\left[  \hat{f}_{\pi_0} \right].\label{eq: alt 2}
\end{equation}

\begin{table}[!htbp]
    \centering
    \begin{tabular}{|c|c|c|c|c|c|c|}
    \hline
        \textbf{Dist.} &$t$& $p$ & \textbf{rej. for $A_1$ (\ref{eq: alt 1})}& $t$&$p$ & \textbf{rej. for $A_2$ (\ref{eq: alt 2})} \\
        \hline
        Average  & -1.461&  0.072 &        False & -1.461& 0.928 &        False\\
         
        Discounted  &-0.834 & 0.202 &        False & -0.834&  0.798 &        False\\
         
        Blackwell  &-0.667& 0.252 &        False & -0.667& 0.748&        False\\
         \hline
    \end{tabular}
    \caption{Welch's $t$-test for $\nu_\pi$.}
    \label{tab:welch test nu}
\end{table}

\begin{table}[!htbp]
    \centering
    \begin{tabular}{|c|c|c|c|c|c|c|}
    \hline
        \textbf{Dist.} &$t$& $p$ & \textbf{rej. for $A_1$ (\ref{eq: alt 1})}& $t$&$p$ & \textbf{rej. for $A_2$ (\ref{eq: alt 2})} \\
        \hline
        Average  &-0.381&  0.352 &        False &-0.381&  0.648 &        False \\
         
        Discounted  &3.596 & 0.999 &        False &3.596&   $\mathbf{1.62\times 10^{-4}}$ &         \textbf{True}\\
         
        Blackwell  &-1.120& 0.115 &        False &-1.120&  0.885 &        False \\
         \hline
    \end{tabular}
    \caption{Welch's $t$-test for $\eta_\pi$.}
    \label{tab:welch test eta}
\end{table}

The $p$-values vaguely differ from those obtained through the Student's $t$-test. The same outcome is obtained: evidence exist to support the discounted MDP policy as inferior to the exiting policy based the stationary system-based performance.

\newpage

\section{Conclusion}
This paper has proposed four types of scalar system-based performance metrics that can be used to gauge how effective policies are. Such scalars are appealing as they allow for simulations to be used in constructing empirical uni-variate distributions. These are used in determining whether a policy provides a \emph{statistically significant} advantage as opposed to a theoretical one. Furthermore, such distributions can be obtained from a simulator that is much more general and complex than the MDP model under which the policy was solved for. It has been shown that state-based optimality, the foundation in solving for an optimal MDP policy, does not always lead to system-based optimality as was the case with $\eta_{\pi}^\alpha$. Such system-based optimality was only guaranteed to hold for Blackwell optimal policies that induce a uni-chain. Moreover, system-based optimality does not always yields unique optima for $\eta_{\pi}$, $\nu_{\pi}$ and $\eta_{\pi}^\alpha$ if the set of gain-optimal policies is larger than one $|\Pi_2| >1$. 

The system-based performance metric relies heavily on an initial distributions over states. Further work should investigate different distributions such as (\ref{eq: hybrid metric}). With regards to the hypothesis testing, modern $A/B$ testing should also be considered. Future work may also focus on other performance statistics that can be derived from the empirical distributions other than its mean. This may include variance and/or Value-at-Risk (VaR). All results obtained here hold for finite-state and finite-action MDPs. The results of this paper should be extended to cases where these are infinite/continuous. Lastly, results have not been formulated for optimal policies that induce a multi-chain.

\newpage
\medskip
\printbibliography

\end{document}